\newtheorem{theorem}{{Theorem}}[section]
\newtheorem*{theorem*}{Theorem}
\newtheorem{lemma}[theorem]{Lemma}
\newtheorem{proposition}[theorem]{Proposition}
\newtheorem{corollary}[theorem]{Corollary}
\newtheorem*{corollary*}{Corollary}
\newtheorem{definition}[theorem]{Definition}
\newcommand{\ve}{\varepsilon}
\newcommand{\mr}[1]{{\rm #1}}
\newcommand{\cB}{\mathcal{B}}
\newcommand{\cU}{\mathcal{U}}\newcommand{\cV}{\mathcal{V}}
\newcommand{\cX}{\mathcal{X}}
\newcommand{\bC}{\mathbb{C}}
\newcommand{\nc}{\newcommand}
\nc{\dbar}{\overline{\partial}}
\nc{\p}{\partial}
\nc{\ol}{\overline}
\nc{\ul}{\underline}
\nc{\iddbar}{\sqrt{-1}\partial \overline{\partial}}
\newcommand{\RNum}[1]{\uppercase\expandafter{\romannumeral #1\relax}}
\nc{\pa}{\partial}
\newif\ifdraft
\nc{\edit}[1]{\ifdraft
{\textcolor{red}{#1}}\else{#1}\fi}
\nc{\Sz}{Sz\'ekelyhidi\ }
\title[CY Metrics from Complete Intersections]{Complete Calabi-Yau Metrics from Smoothing Calabi-Yau Complete Intersections} 
\date{February 19, 2024}
\author[Benjy J. Firester]{Benjy J. Firester}
\address{Department of Mathematics, Harvard University}
\email{benjaminfirester@college.harvard.edu}
\begin{document}
\maketitle

\begin{abstract}
We construct complete Calabi-Yau metrics on non-compact manifolds that are smoothings of an initial complete intersection $V_0$ that is a Calabi-Yau cone, extending the work of \Sz \cite{GaborPaper}.
The constructed Calabi-Yau manifold has tangent cone at infinity given by $\bC \times V_0$.
This construction produces Calabi-Yau metrics with fibers having varying complex structures and possibly isolated singularities.
\end{abstract}

\section{Introduction}
Non-compact Calabi-Yau manifolds play a pivotal role in K\"ahler geometry.
These manifolds have been widely examined since the seminal works of Yau \cite{Yau}, Cheng-Yau \cite{YauChenPaper} and Tian-Yau \cite{YauTian1,YauTianII}, and there are still many outstanding open questions.
Recent examples of complete Calabi-Yau metrics on $\bC^n$ with maximal volume growth have been produced \cite{ConlonRonanJ2015AcCm,conlon-rochon,LiCYC3,GaborPaper}, disproving a conjecture of Tian that the Euclidean metric was the unique such Calabi-Yau metric.
Works by Li \cite{LiCYC3}, Conlon-Rochon \cite{conlon-rochon} and \Sz \cite{GaborPaper} have constructed Calabi-Yau metrics on $\bC^n$ with tangent cones given by $V \times \bC$ for certain $(n-1)$-dimensional Calabi-Yau cones $V$ with smooth cross-section.
In particular, the work of \Sz \cite{GaborPaper} produces Calabi-Yau metrics on $\bC^n$ realized as the total space of a smoothing of an affine hypersurface admitting a Calabi-Yau cone metric.
A natural question to ask is if an $n$-dimensional complete intersection $V_0$ cut out by $k$ quasi-homogeneous polynomials can be used to construct a complete Calabi-Yau metric on $\bC^{n+k}$ with maximal volume growth and tangent cone at infinity given by $V_0 \times \bC^k$.
We present that a generic linear slice of such a smoothing carries a Calabi-Yau metric with tangent cone at infinity $V_0 \times \bC$.
\edit{More recent examples of Calabi-Yau metrics with singular tangent cones have been produced in \cite{chiu2022nonuniqueness,conlon2023warped}.
It is still open to produce examples with highly singular tangent cones of the form $\bC^k \times V_0$ with $k \geq 2$ and $V_0$ having smooth cross-section. The current method encounters difficulties in producing the initial approximately Ricci-flat metric which utilizes that the total space is a one-dimensional smoothing of the initial Calabi-Yau cone.
All such constructions} have tangent cones at infinity from Cheeger-Colding \cite{CheegerJeff1997Otso} and are expected to be unique \cite{ColdingUniqueTanCone,DonaldsonSun}.
In particular, this linear slice is no longer $\bC^*$-equivariant with respect to the action defining the quasi-homogeneous polynomials when the degrees are different.

Let $\edit{V_0 = V(f_1,\ldots,f_k) \subset \bC^{n+k}}$ be an \edit{$n$-dimensional,} quasi-homogeneous, klt complete intersection, where the weighted degree of $f_i$ is $d_i$ with respect to some weight vector $\xi = (w_1,\ldots,w_{n+k})$. We assume that $V_0$ admits a conical Calabi-Yau metric with respect to the rescaling action induced by $\xi$, or equivalently \cite{CollinsTristan2019SmaK}, $(V_0,\xi)$ is K-stable. 
We consider a generic linear smoothing $\cX$ fixing constants $p_1,\ldots, p_k$ given by
\[
\edit{\cX = \{f_1(x) - zp_1 = \cdots = f_k(x) - zp_k = 0\}\subset \bC^{n+k}_x \times \bC_z}.
\]
\edit{The space $\cX$ is the total space of a family that smooths out $V_0$ in that the total space is smooth and has asymptotically conical fibers degenerating to $V_0$}.

The main difficulty in our construction not present in \cite{GaborPaper} is that when not all $d_i$ are equal, the rescaling action does not take the fibers above ${z_0}$ to a rescaling ${\lambda z_0}$.
That is, it does not preserve $\cX$ as considered in $\bC^{n+k+1}$.
In $\cX$, the fibers have potentially different complex structures including isolated singularities.

We construct a form defined outside a sufficiently large set that restrict to a Calabi-Yau metric on each fiber.
By defining this form only near infinity, we guarantee that all the fibers are smooth.

Because each fiber carries a different complex structure, it is necessary to construct a suitable model at infinity of the fibers.
To do this, we consider the fiber over the point that is a suitably rescaled limit of $zp$ as $z \to \infty$.
This will be represented by a particular fiber that does not lie in $\cX$, unless each fiber is the same, as is the case in \cite{GaborPaper}.
We will require that this space is smooth, and when it is, we say that $\cX$ is smooth at infinity.
This definition and construction is given in full detail below in Definition \ref{def:ell&p'}.

Our main theorem constructs a Calabi-Yau metric \edit{on $\cX$, the total space smoothing $V_0$.}

\begin{theorem}\label{thm:mainThm}
If $V_0$ admits a Calabi-Yau cone metric $\omega_0$ and $\cX$ is smooth at infinity as defined below in Definition \ref{def:ell&p'}, then there exists a complete Calabi-Yau metric on $\cX$ with tangent cone at infinity given by $(\bC \times V_0, \iddbar |z|^2 + \omega_0)$.
\end{theorem}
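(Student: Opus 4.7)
The plan is to follow the strategy of \Sz \cite{GaborPaper}, adapted to handle fibers $V_t$ with genuinely different complex structures and, when $\ell < k$, the fact that the intermediate model geometry of the ends is $V_{p'}$ rather than $V_0$. I would proceed in four stages: (i) construct an asymptotically conical Calabi-Yau metric $\omega_t$ on each smooth fiber $V_t$, for $|t|$ large, with tangent cone at infinity $(V_0,\omega_0)$; (ii) glue these fiber metrics smoothly in $z$ together with the flat metric $\iddbar(|z|^2)$ in the $z$-direction to obtain an approximately Calabi-Yau K\"ahler metric on $\cX$ outside a compact set; (iii) correct this approximate metric to be exactly Ricci-flat outside a compact set; and finally (iv) invoke Hein's theorem from \cite{HeinPhd} to produce a global Ricci-flat metric with the prescribed tangent cone.

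For stage (i), I would use the rescaling $G_{t^{-1/d_1}}$ to identify the end of $V_t$ with a perturbation of the cone $V_0$ whose defect decays as $|t| \to \infty$, and then a Tian-Yau/Sz\'ekelyhidi-type construction furnishes an asymptotically conical K\"ahler metric whose Ricci potential is small in an appropriate weighted H\"older norm; a weighted complex Monge-Amp\`ere argument on the cone corrects it to be Ricci-flat. When $\ell < k$, the polynomials $f_{\ell+1},\ldots,f_k$ scale faster than $f_1,\ldots,f_\ell$, so in an intermediate annular region the rescaled fiber is close to $V_{p'}$ rather than to $V_0$. I would therefore also build a model Calabi-Yau metric on $V_{p'}$ (which is smooth by hypothesis) and show that the two models agree in the overlap up to a bounded $\partial\bar\partial$-potential, producing a single metric $\omega_t$ on $V_t$ asymptotic to $V_0$ at the largest scales and to $V_{p'}$ in the intermediate region, as indicated by \eqref{eqn:vp'Infinity}.

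For stage (ii), since the set of singular values of $z$ is finite, for $|t|$ large the fibers $V_t$ form a smooth holomorphic family, and the constructions of stage (i) can be made to depend smoothly on $t$. I would assemble them together with the $z$-direction cone $\iddbar(|z|^2)$ via a warped-product ansatz of the type used in \cite{GaborPaper}, producing an approximate K\"ahler metric $\omega_{\mr{app}}$ on $\cX$ outside a compact set whose Ricci form decays faster than any fixed polynomial along $X_0 = \bC \times V_0$. Stage (iii) then solves a Monge-Amp\`ere equation $(\omega_{\mr{app}} + \iddbar u)^{n+k} = |\Omega|^2$, with $\Omega$ the holomorphic volume form on $\cX$ supplied by the adjunction formula for complete intersections, on a weighted H\"older space of functions supported outside a compact set; this yields a K\"ahler metric that is exactly Ricci-flat outside a larger compact set, since the right-hand side is small in the weighted norm by construction. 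Stage (iv) applies Hein's existence theorem \cite{HeinPhd}: the resulting background has maximal volume growth, a Sobolev inequality, and controlled Ricci potential, so the SOB hypotheses are met and one obtains a global Calabi-Yau metric inheriting the prescribed asymptotics.

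The main obstacle will be stage (i) in the case $\ell < k$: coherently matching the two local model geometries $V_0$ and $V_{p'}$ on a single fiber $V_t$ while keeping all estimates uniform as $t$ varies. Because $\cX$ is not $\bC^*$-equivariant, one cannot simply transport a reference metric by $G_t$; instead, one must establish quantitative stability of the linearized Monge-Amp\`ere operator on $V_0$ and on $V_{p'}$ under small deformations of complex structure, and then verify that the resulting Calabi-Yau potentials depend smoothly on the parameter $z$ near infinity. Executing this uniformly, so that stage (ii) produces a genuinely smooth family, is where the bulk of the technical work should lie.
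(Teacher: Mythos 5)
Your overall route is the paper's: Conlon--Hein asymptotically conical Calabi--Yau metrics on the smooth fibers, glued with $\iddbar(|z|^2)$ into an approximately Calabi--Yau metric on $\cX$ near infinity, a perturbation making it Ricci-flat outside a compact set, and then Hein's theorem. The genuine gap is at your stage (iii): smallness of the Ricci potential in a weighted norm does not by itself let you solve $(\omega+\iddbar u)^{n+1}=(\sqrt{-1})^{(n+1)^2}\Omega\wedge\ol{\Omega}$ (note the exponent is $n+1$, the complex dimension of $\cX$, not $n+k$). The fixed-point argument needs a right inverse of $\Delta_\omega$ on $\rho^{-1}[A,\infty)$ in the weighted H\"older spaces $C^{k,\alpha}_{\delta,\tau}$, bounded independently of $A$, and producing it is the bulk of the work: one must invert the Laplacian on the two model spaces $X_0=\bC\times V_0$ and $X_{p'}=\bC\times V_{p'}$ and glue the local inverses. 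The model $X_0$ carries a whole ray $\bC\times\{0\}$ of singularities, so it falls outside the standard AC/QAC frameworks; the paper handles it by Fourier transform in the $z$-direction combined with Mazzeo's edge calculus on the stratified link, for $\tau\in(2-2n,0)$ and $\delta$ avoiding a discrete indicial set. Nothing in your proposal supplies this step, and the decay you assert for the Ricci form (``faster than any fixed polynomial'') is not what the construction gives: the region-by-region analysis yields only a polynomial rate $\rho^{\delta-2}$ with $\delta<2/d_1$ (improved by the weight $w$ away from the cone points), and the admissible $\delta,\tau$ must be matched to the indicial ranges for which the model Laplacians are invertible.

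A secondary issue is where you locate the main difficulty. On a single fiber there is no need to match $V_0$ and $V_{p'}$ models: each smooth $V_t$ is asymptotically conical with cone $V_0$, so Conlon--Hein directly provides a Calabi--Yau potential $\phi_t$ on the whole fiber, and smooth dependence on the parameter follows from uniqueness in the weighted space together with the implicit function theorem, after localizing to a neighborhood of $p'$ where all fibers are smooth and diffeomorphic. The space $V_{p'}$ enters only as the limit of the rescaled parameters $\hat{p}_{z^{-1/d_1},z}\to p'$, i.e.\ $X_{p'}$ is the model for the region of the total space near the fibers' cone points at large $|z|$ (Regions \RNum{4}--\RNum{5}), both in the decay estimates for the Ricci potential and in the linear analysis. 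So the technical center of gravity lies in the total-space weighted analysis and the comparison maps to $X_0$ and $X_{p'}$, not in the fiberwise construction you single out.
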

\subsection*{Acknowledgments}
I am very thankful to Tristan C. Collins for his dedicated and excellent mentorship, support, and advice throughout this project as well as in framing research questions. 
Additional thanks to Joe Harris for his guidance and mentorship.
This work was funded in part by the Harvard College Research Program (HCRP).

\section{Setup}
\label{1Setup}
Consider a Calabi-Yau cone $V_0$ of complex dimension $n$, algebraically expressed as a complete intersection cut out by polynomials $f_i$:
\[
V_0 = \{f_1(x) = \cdots = f_k(x) = 0 \}\subset \bC^{n+k}
\]
with a quasi-homogeneous action given by a positive weight vector $\xi = (w_1,\ldots, w_{n+k})$ such that
\[
f_i(t\cdot x) = f_i(t^{w_1}x_1,\ldots,t^{w_{n+k}}x_{n+k}) = t^{d_i} f_i(x).
\]
We assume each $d_i > 1$ and $d_1\leq d_2 \leq \cdots \leq d_k$.
This action generates the action of a complex torus $T^C$ on $\bC^{n+k}$ that leaves $V_0$ invariant.
Let $T \subset T^C$ be the maximal compact torus.
We notate $F : \bC^{n+k} \to \bC^k$ to be $F(x) = (f_1(x),\ldots,f_k(x))$ and label fibers $V_t = F^{-1}(t)$:
\[
V_t = \{f_1(x)-t_1 = \cdots = f_k(x)-t_k = 0 \}\subset \bC^{n+k}.
\]
There is also a natural action of $\edit{\bC^*}$ on the codomain of $\bC^k$, which we will also notate using $t \cdot s$ for $s = (s_1,\ldots,s_k)$ by $\lambda \cdot s = (\lambda^{d_1}s_1,\ldots,\lambda^{d_k}s_k)$.
If $d_1 = d_k$, then the result follows similarly from \cite{GaborPaper}, so throughout we will assume that $d_1 \neq d_k$. 
Since $V_0$ is assumed to be a Calabi-Yau cone, there is a nowhere-vanishing holomorphic $n$-form $\Omega_V$ on $V_0 \setminus \{0\}$ given as
\[
\Omega_{V} = \frac{dx_1\wedge \cdots \wedge dx_{n+k}}{df_1\wedge \cdots \wedge df_k},
\]
which is a notation for the global expression defined locally as 
\[
\Omega_{V} = \frac{dx_{k+1}\wedge \cdots \wedge dx_{n+k}}{\pa_{x_1}f_1\cdots \partial_{x_k}f_k}
\]
where the denominator does not vanish, and similar expressions for other regions where $\pa_{x_{i_1}}f_{1}\cdots \pa_{x_{i_k}}f_{k}$ is non-zero with $i_k$ distinct.
$V_0 \setminus \{0\}$ has a symplectic form $\omega_{V_0}$, which is a Ricci-flat K\"ahler cone metric such that 
\begin{equation*}\label{eqn:CYmetricToOmega}
\omega_{V_0}^n = (\sqrt{-1})^{n^2}\Omega_{V} \wedge \ol{\Omega}_{V}.
\end{equation*}
In order for this to be true, we must have a topological criterion stating that $\Omega$ has degree $n$ under the action, or equivalently 
\begin{equation}\label{eqn:WeightDegreeRestraint}
\sum w_j - \sum d_i = n. 
\end{equation}
We further assume $V_0$ is non-degenerate and is not contained in a codimension-1 variety.
\begin{lemma}
If $V_0$ does not lie in a \edit{hyperplane} properly contained in $\bC^{n+k}$, then $d_1 > 2$.
\end{lemma}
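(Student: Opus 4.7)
The plan is to argue by contrapositive: assuming $d_1 \leq 2$, I will exhibit a nonzero linear form vanishing on $V_0$. The analysis centers on $f_1$, whose weighted degree is the smallest.

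First I examine the monomial structure of $f_1$. Since $f_1$ is quasi-homogeneous of weighted degree $d_1 \leq 2$ and every $w_j > 0$, each monomial $\prod_j x_j^{\alpha_j}$ in $f_1$ satisfies $\sum_j \alpha_j w_j = d_1 \leq 2$. A linear monomial $x_{j_0}$ requires $w_{j_0} = d_1$, while any monomial with $|\alpha| \geq 2$ forces at least one involved variable to satisfy $w_j \leq d_1/|\alpha| \leq 1$. If $f_1$ is itself a linear form, then $V_0 \subset \{f_1 = 0\}$ and we are done. Otherwise, the isolated singularity assumption implies that the Jacobian $(\partial_j f_i)$ drops rank at the origin, so the linear parts $L_i := df_i(0)$ are linearly dependent. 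Taking $\mathbb{C}$-linear combinations within each degree class (which preserves both the ideal and the quasi-homogeneity of the generators), I may replace the generators so that $f_1$ has no linear part, and consequently every variable appearing in $f_1$ has weight $\leq 1$.

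In the hypersurface case $k = 1$ the argument closes by a numerical contradiction. Every variable must appear in $f_1$: if some $x_{j_0}$ is absent, then $V_0 = \{f_1 = 0\}$ is a cylinder in the $x_{j_0}$-direction, producing a positive-dimensional singular locus and contradicting isolated singularity. Hence all $w_j \leq 1$, so $\sum_j w_j \leq n+1$, while the Calabi-Yau relation $\sum_j w_j = n + d_1$ forces $d_1 \leq 1$, contradicting $d_1 > 1$. For $k \geq 2$ the high-weight variables (those with $w_j > 1$) cannot appear in $f_1$, so each must be accounted for by some $f_i$ with $i \geq 2$. This is where the non-degeneracy hypothesis is genuinely needed: without it, $V_0$ could reside in a coordinate hyperplane $\{x_j = 0\}$ and effectively reduce to a lower-codimension complete intersection. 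Combining the Calabi-Yau relation, the rank-deficiency of the Jacobian at the origin, and the weighted-degree constraints on how high-weight $x_j$ enter higher-degree $f_i$, one extracts a nontrivial linear form in the ideal $(f_1, \ldots, f_k)$, producing a hyperplane containing $V_0$ and contradicting non-degeneracy.

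The main obstacle is the extraction of the linear form in the general $k \geq 2$ case. The hypersurface argument is neatly numerical and in fact does not require the non-degeneracy assumption at all, but for $k \geq 2$ one must track carefully how each high-weight variable is absorbed by the higher-degree polynomials. This requires combinatorial bookkeeping of variable supports, combined with the weighted Euler identity $\sum_j w_j x_j \partial_j f_i = d_i f_i$, to show that the rank-deficiency at the origin forces some linear combination of the $f_i$'s to collapse modulo the ideal of higher-order terms into a pure linear form. The Calabi-Yau relation is the crucial over-constraint: together with $d_1 \leq 2$ and isolated singularity, it leaves no room for a non-degenerate configuration, forcing $V_0$ into a hyperplane.
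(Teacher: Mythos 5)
Your argument has genuine gaps and, as written, does not prove the lemma. First, the reduction ``I may replace the generators so that $f_1$ has no linear part'' does not follow from the rank deficiency of the Jacobian at $0$: the linear part of a quasi-homogeneous $f_i$ is a combination of those variables $x_j$ with $w_j = d_i$, so linear parts belonging to different degree classes involve disjoint sets of variables, and a nontrivial dependence among the $df_i(0)$ therefore lives inside a single degree class --- which need not be the class of $d_1$, and even when it is, you can only normalize \emph{some} generator of that class, not necessarily $f_1$. Second, even granting that $f_1$ has no linear part, the inference ``consequently every variable appearing in $f_1$ has weight $\leq 1$'' is a non sequitur: a quadratic monomial $x_a x_b$ of weighted degree $d_1 \leq 2$ only forces $\min(w_a,w_b)\leq 1$, so a variable of weight up to $d_1 - w_{\min} > 1$ may still appear. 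This breaks the numerical contradiction even in your hypersurface case $k=1$, where you sum the bound $w_j \leq 1$ over all variables. Third, and most importantly, the case $k\geq 2$ --- the heart of the matter --- is not proved at all: the final paragraph announces that ``combinatorial bookkeeping'' and the Euler identity will extract a linear form from the ideal, but no such extraction is carried out, and given the failure of the two steps above there is no evident route to it.

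The paper's proof is of a completely different nature and much shorter: it invokes the Lichnerowicz obstruction of Gauntlett--Martelli--Sparks--Yau, which uses the assumed Calabi--Yau cone metric on $V_0$ to force every weight to satisfy $w_j > 1$ (weight exactly $1$ being excluded by the non-degeneracy hypothesis), and then observes that each $d_i \geq 2w_{\min} > 2$ since the defining polynomials have no linear terms. Your proposal tries to replace this metric input by the degree relation $\sum_j w_j - \sum_i d_i = n$ alone; it is far from clear that the statement can be recovered from that relation together with the isolated-singularity and non-degeneracy hypotheses, and in any case your attempt does not close. If you want a complete proof, you should bring in the Lichnerowicz-type lower bound on the weights (or some equivalent consequence of the existence of the Calabi--Yau cone metric) rather than relying solely on the algebraic structure of the $f_i$.
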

\begin{proof}
Let $w_{\mr{min}}$ be the smallest weight. 
Since we assume that $V_0$ admits a Calabi-Yau metric, by the Lichnerowicz obstruction of Gauntlett-Martelli-Sparks-Yau \cite{Gauntlett_2007}, $w_{\mr{min}} > 1$. 
If $w_{\mr{min}}$ is equal to 1, then $V_0$ is contained in a smaller dimensional hypersurface. 
Each $d_i$ must be at least $2w_{\mr{min}}$, showing that each is greater than 2.
\end{proof}

By Sard's theorem, we can choose some point $p = (p_1,\ldots,p_k)$ normalized to $|p| = 1$ using the scaling action such that $V_p$ is smooth.
\begin{definition}\label{def:ell&p'}
We define $\ell$ to be the maximal index such that $d_1 = d_2 = \cdots = d_\ell < d_{\ell + 1} \leq \cdots \leq d_k$ and define the point $p' = (p_1,\ldots,p_{\ell},0\ldots,0)$.
We say that $\cX$ is smooth at infinity if the fiber $V_{p'}$ is smooth.
\end{definition}
In many cases, there exists a smooth fiber of this form, or a small perturbation of the polynomials exists that preserves $K$-stability, (which is equivalent to having a Calabi-Yau metric as per Collins-\Sz \cite{CollinsTristan2019SmaK}), and has a smooth fiber of this form. 
However, not all complete intersections admit such a smooth fiber, or even perturbations that admit a smooth fiber of this form.
For example, for $f_1 = z_1^2 + z_2z_3 + z_4^3, f_2 = z_1z_4^2 + z_2^2 + z_3^2z_4 + z_5^3$ with weights $\xi = (27,63/2, 45/2,18,21)$ and of degrees $d_1 = 54$ and $d_2 = 63$, no fiber $V_{(t_1,0)} = \{f_1 - t_1 = f_2 = 0\}\subset \bC^5$ is smooth and no perturbation of the polynomials fixes this singularity either by smoothing it, or perturbing it off the vanishing locus.
It would be interesting to know if the smooth at infinity assumption could be dropped.

We define the total space $\cX$ as swept out by the set of fibers $V_{zp}$,
\[
\cX =\{f_1(x) - zp_1 = \cdots = f_k(x) - zp_k = 0\}\subset\bC^{n+k+1}.
\]
$\cX$ is a complete intersection of dimension $n+1$ in the ambient space $\bC^{n + k+1}$ with variables $(x_1,\ldots,x_{n+k},z)$.
We require at least one fiber to be smooth, which we label $V_1$.
The singular fibers cut out an analytic subvariety of $\bC$, and therefore, there are either finitely many or every fiber is singular.
The assumption that $V_1$ is smooth ensures that outside a sufficiently large compact set, all fibers are smooth.

On $\cX$, we will construct a Calabi-Yau metric based on gluing a Calabi-Yau metric on each smooth fiber $V_t$ with the product Calabi-Yau metric of $V_0 \times \bC$.
Let $X_t = V_t \times \bC$ with $X_0$ and $X_{p'}$ being of primary interest.
There is a nowhere-vanishing holomorphic $(n+1)$-form $\Omega$ on $\cX$ and each $X_t$ is defined using notation from above as
\[
\Omega = \frac{dz \wedge dx_1 \wedge \cdots \wedge dx_{n+k}}{df_1\wedge \cdots \wedge df_{k}},
\]
which can be explicitly computed as $\Omega = dz \wedge \pi_x^* \Omega_V$ with $\pi_x : \bC^{n+k+1}\to \bC^{n+k}$ the projection onto the $x$-coordinates.

Let $\omega_{V_0} = \iddbar r^2$ for the cone radius function $r$ on $V_0$.
We define a scaling action $G_t:\bC^{n+k+1} \to \bC^{n+k+1}$ given by
\begin{equation}\label{eqn:GtMaps}
G_t(z,x_1,\ldots,x_{n+k}) = (tz,t^{w_1}x_1,\ldots,t^{w_{n+k}}x_{n+k}),
\end{equation}
which acts by the weight vector $\xi$ on $\bC^{n+k}$ and linearly on $z$.
The maps $G_t$ are the homothetic transformations on $V_0$, the transformations generated by its Reeb field.
The fibers $V_{(t_1,\ldots,t_k)}$ sweep out $\bC^{n+k}$ as $t = (t_1,\ldots,t_k)$ vary through all possible values.
Note that $\cX$ is realized as the linear slice where $t = zp$ for $z \in \bC$ to form $\cX$.
When $d_1 \neq d_k$, $G_t(\cX) \not\subset \cX$; this scaling action does not preserve the fibration structure of $\cX$.
It is for this reason that we need to construct a global function that restricts to a Calabi-Yau metric on each fiber, a construction not needed when the fibration is the trivial one over $\bC^*$, as in \cite{GaborPaper}.

We define a radius function $R$ on the ambient space $\bC^{n+k+1}$ which restricted to $V_0$ is uniformly equivalent to the radius function $r$.
We can define $R$ to be 1 on the unit sphere $S^{2(n+k)+1} \subset\bC^{n+k+1}$ and defining it elsewhere by enforcing that it has degree 1 under the map $G_t$.
The function $r$ can be extended arbitrarily to $\bC^{n+k+1}$ by first extending it smoothly on the sphere $R = 1$ and then to other points by similarly enforcing that it is homogeneous with degree 1 under $G_t$.
The extended function will still be notated as $r$.
\begin{proposition}\label{prop:CYFiberMetric}
Let $V_t$ be a smooth fiber.
The form $\omega = \iddbar r^2$ on $V_t$ is a well-defined metric outside a compact set, for $V_t \cap \{R > A\}$ for some $A \gg 0$, and its Ricci potential
\[
h = \log \dfrac{(\iddbar r^2)^{n}}{(\sqrt{-1})^{n^2} \Omega_{V} \wedge \ol{\Omega}_{V}}
\]
satisfies the decay estimate $|\nabla^i h|_{\omega} = O(R^{-d_1-i})$ as $R \to \infty$ with this norm measured using $\omega$.
\end{proposition}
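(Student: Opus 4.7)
The plan is to reduce the claim at large radius on $V_t$ to a perturbative problem on a fiber close to $V_0 \cap \{R = 1\}$ via the quasi-homogeneous scaling $G_\lambda$, and then to extract the decay rate from how the defining equations of the rescaled fiber differ from those of $V_0$. For $x \in V_t$ with $R_0 := R(x) \gg 1$, set $\tilde x := G_{1/R_0}(x)$, which satisfies $R(\tilde x) = 1$ and lies in the auxiliary fiber
\[
\tilde V^{(R_0)} := \{f_1(y) = R_0^{-d_1} t p_1, \ldots, f_k(y) = R_0^{-d_k} t p_k\} \subset \bC^{n+k}.
\]
As $R_0 \to \infty$, $\tilde V^{(R_0)}$ converges to $V_0$ on compact subsets of $\bC^{n+k} \setminus \{0\}$. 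The degree relation $\sum w_j - \sum d_i = n$ of equation \ref{eqn:WeightDegreeRestraint} makes both $(\iddbar r^2)^n$ and $(\sqrt{-1})^{n^2}\Omega_V \wedge \overline{\Omega}_V$ homogeneous of weight $2n$ under $G_\lambda$, so their ratio is scale-invariant and hence $h_{V_t}(x) = h_{\tilde V^{(R_0)}}(\tilde x)$.

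On the annulus $V_0 \cap \{1/2 \le R \le 2\}$, the differential $dF = (df_1,\ldots,df_k)$ has full rank because $V_0$ is smooth away from the origin. The implicit function theorem therefore represents $\tilde V^{(R_0)}$, for $R_0$ sufficiently large, as a smooth graph over $V_0 \cap \{1/2 \le R \le 2\}$ inside $\bC^{n+k}$, with graphing function bounded by $C_k R_0^{-d_1}$ in $C^k$ for every $k$, since the defining equations of $\tilde V^{(R_0)}$ differ from those of $V_0$ by terms of size at most $C R_0^{-d_1}$ (the smallest exponent $d_1$ dominates). Consequently, both $\iddbar r^2|_{\tilde V^{(R_0)}}$ and $\Omega_V|_{\tilde V^{(R_0)}}$ are smooth $O(R_0^{-d_1})$ perturbations of their restrictions to $V_0$. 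Positivity of the Calabi-Yau cone metric on $V_0$ then forces $\iddbar r^2|_{\tilde V^{(R_0)}}$ to be positive on this annulus for $R_0$ large, and rescaling back by $G_{R_0}$ yields positivity of $\omega = \iddbar r^2$ on $V_t \cap \{R > A\}$ for some $A \gg 0$.

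For the Ricci potential, the Calabi-Yau condition $\omega_{V_0}^n = (\sqrt{-1})^{n^2}\Omega_V \wedge \overline{\Omega}_V$ gives $h \equiv 0$ on $V_0$, and the smooth $O(R_0^{-d_1})$ convergence above yields $|h(\tilde y)| \le C R_0^{-d_1}$ together with matching bounds on each fixed number of ordinary derivatives, for $\tilde y \in \tilde V^{(R_0)} \cap \{1/2 \le R \le 2\}$. Translating this back via $G_{R_0}$ uses that $\omega$ has weight $2$ under $G_\lambda$: the associated metric tensor scales by $\lambda^2$, geodesic distances by $\lambda$, so the $\omega$-norm of each covariant derivative picks up a factor of $\lambda^{-1}$. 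Iterating, each derivative of $h$ in $\omega$-norm at $x$ contributes an additional $R_0^{-1}$, giving $|\nabla^i h|_\omega(x) = O(R_0^{-d_1-i}) = O(R(x)^{-d_1-i})$, as claimed.

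The main technical step is the uniform-in-$R_0$ $C^k$ control of the implicit graphing function, which is where smoothness of $V_0$ on $\{R = 1\}$ is essential; once this is in place, the rest is bookkeeping converting scale-invariance into the stated decay rates, and the argument closely parallels the hypersurface computation in \Sz \cite{GaborPaper}.
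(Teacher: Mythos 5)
Your proposal is correct and follows essentially the same route as the paper's proof: rescale by the quasi-homogeneous action (your $G_{1/R_0}$, the paper's $K^{-1}$) to a fixed annulus, invoke the implicit function theorem to view the rescaled fiber as an $O(R_0^{-d_1})$ perturbation (graph) of $V_0$ there, deduce positivity of $\iddbar r^2$ and $C^k$-smallness of the Ricci potential from the Calabi-Yau condition on $V_0$, and recover $|\nabla^i h|_\omega = O(R^{-d_1-i})$ by noting scale-invariance of $h$ and one factor of $R_0^{-1}$ per derivative when undoing the rescaling. The only cosmetic difference is that the paper uses local maps $g_i$ sending the rescaled fiber into $V_0$ rather than a single graph over the annulus; the substance is identical.
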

\begin{proof}
Let $K \gg 0$ be a large constant and examine the annular region $\frac{K}{2} < R < 2K$.
We will rescale the metric by $K^{-1}$ and label the rescaled coordinates as 
\[
\tilde{x} = K^{-1}\cdot x,\quad \tilde{r} = K^{-1}r,\quad \tilde{R} = K^{-1}R,
\]
so the metric in these new coordinates can be expressed as $\tilde{\omega} = K^{-2}\omega = \iddbar \tilde{r}^2$.

In these rescaled coordinates, the algebraic expression for $V_t$ is given by $F(\tilde{x}) = K^{-1} \cdot t$.
Expanded, this is expressed as 
\[
\{f_1(\tilde{x}) - K^{-d_1}t_1 = \cdots= f_1(\tilde{x}) - K^{-d_k}t_k = 0\},
\]
or $V_{K^{-1}\cdot t}$ in the concise notation.
Specifically, on the annulus defined by $\{\frac{1}{2} < \tilde{R} < 2\}$, the spaces $V_{K^{-1}\cdot t}$ converge to $V_0$ in the $C^\infty$ topology uniformly on compact sets since each $d_i > 1$. 
Furthermore, $\tilde{r}^2$ is a function of only $\tilde{x}$, independent of $K$ since $r$ is quasi-homogeneous with respect to the scaling action.

The implicit function theorem gives a cover of the annular region $\{\frac{1}{2} < \tilde{R} < 2\}$ in $V_{K^{-1}\cdot t}$ by finitely many coordinate patches $B_i$ such that $V_{K^{-1}\cdot t}$ is locally expressible as a hyperplane in each chart.
The implicit function theorem provides holomorphic functions $g_i : B_i \to \bC^{n+k}$ such that $g_i(V_{K^{-1}\cdot t})$ is contained in $V_0$, and $g_i(y) = y + O(K^{-d_1})$.
The decay term is limited by the smallest degree $d_1$, as will be the case in all the following arguments.
We use these local expressions to compare the rescaled potential to its pullback, and see that it has decay given by $K^{-d_1}$.
We can observe that $\tilde{r}^2 - g_i^*(\tilde{r}^2)$ must have growth decay $O(K^{-d_1})$, so once $K$ is large enough, the form $\tilde{\omega}$ is indeed positive definite, proving the first claim.
It is further uniformly equivalent to $\iddbar \tilde{R}^2$, which is a cone metric on $\bC^{n+k+1}$ from He-Sun \cite[Lemma 2.2]{HeWeiyong2016FcaS}.

For the second part of the proposition, we want to examine the Ricci potential and compare $\Omega$ to its pullback along each implicit function $g_i$.
Since by assumption $\omega_{V_0}^n = \sqrt{-1}^{n^2}\Omega \wedge \overline{\Omega}$, the difference has bounded growth given by $\Omega_V - g_i^*(\Omega_V) = O(K^{-d_1})$.
The Ricci potential is invariant under scaling from equation \ref{eqn:WeightDegreeRestraint}.
This gives the decay 
\[
|\nabla^i h|_{K^{-2}\iddbar R^2} = O(K^{-d_1})
\]
finishing the result since for $K \gg 0$ sufficiently large, on $V_0$ the two forms $\iddbar R^2$ and $\iddbar r^2$ give rise to uniformly equivalent metrics.
When we instead measure using the unscaled metric $\iddbar r^2$, there is an added factor of $K^{-1}$ for each derivative, so $|\nabla^i h|_{\iddbar r^2} = O(K^{-d_1-i})$.
\end{proof}
The form $\omega = \iddbar r^2$ is only a positive definite metric for $R \gg 0$, so we glue in the form $C(1 + |x|^2)^\alpha$ on the bounded region where $\omega$ is not positive definite and piece the two together to make a globally well-defined metric.
For $C$ sufficiently large and $\alpha > 0$, this is greater than ${r}^2$ on a large ball around the origin of radius $A$ from Proposition \ref{acybound}.
The form $\iddbar r^2$ becomes a well-defined metric outside this region.
We take a regularized maximum of the two forms above to have a global metric on $V_t$, a construction found in Demailly \cite[\textsection 5.E]{Demailly}.
The results of Conlon-Hein \cite{Conlon_2013} say that this new metric, which is asymptotic to a Calabi-Yau metric with sufficient growth decay, can be perturbed to a Calabi-Yau metric defined by $\eta_t = \iddbar \phi_t$.
Furthermore, this metric is unique by \cite[Theorem 2.1]{Conlon_2013}.
In particular, $\phi_t$ is invariant under the action of the compact torus $T$ generated by the $\bC^*$-action.
Notably, the value of $\phi_t(z^{-1/d_1}\cdot x)$ is well-defined regardless of the choice of branch cut used to define the fractional exponent, which will be used to define the asymptotically Calabi-Yau metric on all of $\cX$.
Theorem 2.1 of \cite{Conlon_2013} gives some $c > 0$ (if $d_1> 3$, then even $c>1$ can be chosen), constants $C_i$ such that for $x$ in the annular region $\frac{1}{2} < R < 2$, the following estimate holds for $\lambda > 1$:
\begin{equation}\label{CHbound}
\left| \nabla^i\left(r^2 - \lambda^{-2}\phi_t(\lambda\cdot x)\right)\right|_{\iddbar R^2} < C_i\lambda^{-2-c}. 
\end{equation}
As constructed above, the Calabi-Yau potentials $\phi_t$ on $V_t$ arise as perturbations of the same global potential $r^2$ defined on $\cX$ as restrictions to each fiber $V_t$.
By the uniqueness results of Conlon-Hein \cite{Conlon_2013}, the functions $\phi_t$ assemble together to form a smooth function $\Phi$.
Let $\cX_{\mr{sm}}$ be the union of all fibers $V_{t}$ that are smooth.
In $\bC^{n+k+1}$, $\Phi$ is defined over the union of all smooth fibers $V_t$, notably over all fibers over a neighborhood of $p'$ and $\cX_{\mr{sm}}$.
The above is summarized in the following corollary. 
\begin{corollary}\label{cor:CYMetricOnFiber}
There exists a smooth function $\Phi$ defined on $\cX_{\mr{sm}}$ whose restriction to each $V_t$ satisfies 
\[
\Phi\vert_{V_t} = \phi_t, \quad (\iddbar \phi_t)^n = (\sqrt{-1})^{n^2}\Omega_V \wedge \ol{\Omega}_V
\]
which is the Calabi-Yau metric asymptotic to $r^2$ in that it satisfies the decay conditions of equation \ref{CHbound}.
\end{corollary}
We note that we will only be using $\Phi$ in a neighborhood around $V_{p'}$ corresponding to fibers $V_{tp}$ for $|t| \gg 1$ sufficiently large where all the fibers are smooth by assumption, so the form $\iddbar \Phi$ will be well-defined.
\section{The Approximate Solution}
\label{2ACYmetric}
In this section, we produce a metric on $\cX$ which is asymptotically Calabi-Yau with sufficient decay of the Ricci potential such that we will be able to later perturb it to be Ricci-flat outside a compact set.
The metric will be constructed by gluing model Calabi-Yau metrics on different regions near infinity.
The model spaces will be products $X_0 = \bC \times V_0$ and $X_{p'} = \bC \times V_{p'}$ with metrics $\iddbar(|z|^2 + r^2)$  and $\iddbar(|z|^2 + \phi_{p'})$, respectively.
Since each fiber itself is asymptotic to $V_0$, as we approach infinity mainly in the fiber direction $R$, the model is $X_0$.
When we approach infinity mainly in the $z$-direction, the model is $X_{p'}$.
These model spaces are realized as cones in $\bC^{n+k+1}$ with homothetic scalings given by the maps $G_t$ in equation \ref{eqn:GtMaps} using the action with weights of $1$ on the $z$ variable and $\xi$ on the $x$ variables.

Let $\gamma_1$ be a monotonic cutoff function such that $\gamma_1(x) = 0$ for $x < 1$ and $\gamma_1(x) = 1$ for $x > 2$. 
Let $\gamma_2 = 1- \gamma_1$ be its complement.
We define a new radius function on the total space $\cX$ as $\rho^2 = R^2 + |z|^2$.
We define a metric
\begin{equation}\label{eqn:omegaDefn}
\omega = \iddbar(|z|^2 + \gamma_1(R/\rho^\alpha)r^2 + \gamma_2(R/\rho^\alpha)|z|^{\frac{2}{d_1}}\Phi(z^{-\frac{1}{d_1}}\cdot x))
\end{equation}
for a small constant $\alpha$ to be chosen. 
Notating $\Psi = \Phi - r^2$ on $\cX$ where $\rho \gg 0$ is sufficiently large, the metric can be expressed succinctly as 
\[
\omega = \iddbar(|z|^2 + r^2 + \gamma_2(R/\rho^\alpha)|z|^\frac{2}{d_1}\Psi(z^{-\frac{1}{d_1}}\cdot x)),
\]
emphasizing that this metric is asymptotic to $\iddbar(|z|^2 + r^2)$ since $\Psi$ decays rapidly per equation \ref{CHbound}.
The main result of this section is to show sufficient decay of the Ricci potential $h$.
\begin{proposition}
\label{acybound}
For some $\alpha \in (\frac{1}{d_1},1)$, the form $\omega$ from equation \ref{eqn:omegaDefn} defines a metric on $\cX$ outside a compact set, and for suitable constants $\kappa, C_i > 0$, there exists $\delta < \frac{2}{d_1}$ such that the Ricci potential $h$ has decay
\[
|\nabla^i h| = \begin{cases}
C_i \rho^{\delta-2-i},& R > \kappa \rho\\
C_i \rho^{\delta}R^{-2-i},& R \in(\kappa^{-1}\rho^{\frac{1}{d_1}},\kappa \rho )\\
C_i \rho^{\delta - \frac{2}{d_1} - \frac{i}{d_1}},& R < \kappa^{-1}\rho^{\frac{1}{d_1}}.
\end{cases}
\]
\end{proposition}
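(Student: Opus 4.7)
The plan is to verify positivity and the Ricci potential decay separately in each of three regions of $\cX$, exploiting the homothety invariance of $h = \log(\omega^{n+1}/(\sqrt{-1})^{(n+1)^2}\Omega\wedge\bar\Omega)$ under $G_\lambda$, which follows from the topological relation \ref{eqn:WeightDegreeRestraint}. This invariance implies that $|\nabla^i h|_\omega$ transforms as $\lambda^{-i}$ under rescaling, so a fixed-scale estimate on a unit annulus translates directly into the claimed decay rates.

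For positivity, the outer zone $\{R > 2\rho^\alpha\}$ reduces $\omega$ to $\iddbar(|z|^2 + r^2)$, which is a metric by Proposition \ref{prop:CYFiberMetric}. In the inner zone $\{R < \rho^\alpha\}$, I would rescale by $G_{|z|^{-1/d_1}}$: the $x$-variables become $\tilde x$ living in the fiber $V_s$ with $s = \hat p_{z^{-1/d_1},z}$, so $\phi_s(\tilde x)$ becomes a Calabi--Yau potential on a fiber close to $V_{p'}$, and the smooth dependence of $\phi_s$ on $s$ established at the end of Section \ref{1Setup}, combined with positivity of $\phi_{p'}$, yields positivity of the rescaled metric. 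In the transition annulus, both interpolated potentials approximate the cone potential $r^2$ up to errors of order $R^{-c}$ (from \ref{CHbound} and Proposition \ref{prop:CYFiberMetric}), so the cross-terms from $\iddbar\gamma_i$ are subleading; choosing $\alpha \in (1/d_1,1)$ places this annulus strictly inside the middle band of the proposition.

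For the decay of $h$, in Region 1 ($R > \kappa\rho$) I would rescale by $\lambda = \rho$: the rescaled total space $G_\lambda^{-1}(\cX)$ is an $O(\rho^{-d_1})$ perturbation of $V_0 \times \bC$ by \ref{eqn:vp'Infinity}, and Proposition \ref{prop:CYFiberMetric} gives $|\nabla^i h| = O(\rho^{-d_1-i}) \le C_i\rho^{\delta-2-i}$ since $d_1 > 2$ by Lemma 2.1. In Region 3 ($R < \kappa^{-1}\rho^{1/d_1}$) I would rescale by $\lambda = |z|^{1/d_1}$: the rescaled metric is the product Calabi--Yau on $X_s = V_s \times \bC$, with errors stemming from the deviation of $s$ from $p'$ of order $|z|^{1-d_{\ell+1}/d_1}$, the Conlon--Hein tail \ref{CHbound} of order $\lambda^{-c}$, and the perturbation of $\cX$ from $X_s$, which combine after undoing the rescaling to yield $C_i \rho^{\delta-2/d_1-i/d_1}$.

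The main obstacle is Region 2, the transition band $\kappa^{-1}\rho^{1/d_1} < R < \kappa\rho$, where both cutoffs are active and the derivatives of $\gamma_i(R/\rho^\alpha)$ generate additional Ricci potential contributions proportional to the difference of the two interpolated potentials. By \ref{CHbound}, Proposition \ref{prop:CYFiberMetric}, and the smooth dependence of $\phi_s$ on $s$, this difference is controlled by $R^{-c}$ times the cone potential, yielding the mixed bound $C_i \rho^\delta R^{-2-i}$. The delicate aspect is to track uniformly three distinct error sources -- the variation of $s$ away from $p'$, the Conlon--Hein tail of $\phi_s - r^2$, and the perturbation of the total space from $X_0$ -- each of which decays at a different rate along the two scaling axes, and to arrange a single $\delta < 2/d_1$ for which all three fit inside the stated bound and the bounds at the interfaces $R \sim \kappa\rho$ and $R \sim \kappa^{-1}\rho^{1/d_1}$ agree with those of Regions 1 and 3.
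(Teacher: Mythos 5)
Your overall strategy is the same as the paper's: rescale by the homothety, compare $\cX$ to the model spaces $X_0$ and $X_{p'}$ (or nearby fibers), and control the errors with the Conlon--Hein estimate \ref{CHbound} and the smooth dependence of $\phi_s$ on $s$. However, the middle band $\kappa^{-1}\rho^{1/d_1} < R < \kappa\rho$ is exactly where your proposal stops short, and that is where the real content of the proof lies. You describe it as a single region ``where both cutoffs are active,'' but the cutoffs $\gamma_1(R/\rho^\alpha),\gamma_2(R/\rho^\alpha)$ are only active on the thin annulus $R\sim\rho^\alpha$; the rest of the band splits into a part modeled on $X_0$ (with $\rho^\alpha \lesssim R \lesssim \kappa\rho$, error $DK^{-d_1}$ from the defining equations) and a part modeled on the varying fibers $X_{\hat{p}_{K^{-1},z_0}}$ limiting to $X_{p'}$ (with $\kappa^{-1}\rho^{1/d_1}\lesssim R \lesssim \rho^\alpha$, errors $K^{1-d_1}$, the Conlon--Hein tail $K^{-1-c}D^{\frac{2+c}{d_1}-1}$, and the deviation $K^{-d_j}D$, $j>\ell$, of the base point from $p'$). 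The statement you must actually verify --- and which you explicitly defer as ``the delicate aspect'' --- is that a single $\alpha\in(\frac{1}{d_1},1)$ and a single $\delta<\frac{2}{d_1}$ make all of these fit under $C\rho^{\delta}R^{-2-i}$; e.g.\ one needs $DK^{-d_1-i}\leq C\rho^{1+\alpha(2-d_1)}R^{-2-i}$ with $1+\alpha(2-d_1)<\frac{2}{d_1}$ (forcing $\alpha$ close to $1$, not merely $\alpha>\frac{1}{d_1}$), and $(K^{-1}D^{1/d_1})^{2+c}\leq CD^{\frac{2+c}{d_1}-c\alpha}K^{-2}$ on the gluing annulus. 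Without this exponent bookkeeping, and without the comparison of the drifting model $X_{\hat{p}_{K^{-1},z_0}}$ to the fixed $X_{p'}$, the middle-band estimate --- and hence the existence of one $\delta$ valid in all three zones --- is not established.

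A smaller point: in your outer region the perturbation of $G_\lambda^{-1}(\cX)$ from $V_0\times\bC$ is of size $\lambda^{1-d_1}$, not $\lambda^{-d_1}$, because the term $t^{1-d_i}zp_i$ in equation \ref{eqn:vp'Infinity} carries a rescaled $z$ of order one; Proposition \ref{prop:CYFiberMetric} concerns a single fiber and does not directly give the total-space bound. The conclusion survives, but only after noting that one needs $\delta>3-d_1$, which is compatible with $\delta<\frac{2}{d_1}$ precisely because $(d_1-1)(d_1-2)>0$ for $d_1>2$; this constraint should be stated, since it is what ties the admissible $\delta$ in the outer region to the one needed in the middle band.
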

\begin{proof}
We split the ambient space into five regions to perform the analysis depending on from which direction we approach infinity, (i.e., the asymptotic relationship between $R$ and $z$).
The approach follows similarly to \Sz \cite{GaborPaper} in the first two regions.
The latter regions differ because of the non-trivial fibration structure.
\smallskip
\noindent\textbf{Region \RNum{1}:} In this region, let $R > \kappa \rho$ for some $0 < \kappa < 1$ where we are far away from the \textit{cone point}.
Here, each $V_t$ is approximated well by $V_0$, so the appropriate model space is $X_0$.
We define rescaled coordinates 
\[
\tilde{z} = D^{-1}z,\quad \tilde{x} = D^{-1}\cdot x,\quad \tilde{r} = D^{-1}r
\]
and examine $D^{-2}\omega$, the properly rescaled metric in these coordinates.
In this region, $\gamma_1 = 1$ and for $D \gg 0$ sufficiently large, the metric is simply $\iddbar(|z|^2 + r^2)$.
We can re-express $\cX$ in these rescaled coordinates as 
\[
\cX = \{f_1(\tilde{x}) - D^{1-d_1}\tilde{z}p_1 = \cdots = f_k(\tilde{x}) - D^{1-d_k}\tilde{z}p_k=0\},
\]
and we compare to $X_0 = V_0 \times \bC$ with metric $\iddbar(|z|^2 + r^2)$.
The error induced by the approximation by this variety is $D^{1-d_1}$, as this is the limiting term since it is the greatest exponent, similar to in Proposition \ref{prop:CYFiberMetric} by using the implicit function theorem.

This gives the decay bound $|\nabla^i h|_{D^{-2}\omega} \leq C_i D^{1-d_1}$, and by using the unscaled metric $\omega$, there is a term of $D^{-1}$ for each derivative giving $|\nabla^i h|_{\omega} \leq C_i D^{1-d_1-i}$.
We conclude that in this region, $\delta$ can be chosen such that $\delta > 3-d_1$.
Since $d_1 \geq 2$, $\delta$ can be chosen to satisfy $\delta < \frac{2}{d_1}$, and if $d_1 \geq 3$, $\delta$ can be made negative.

\smallskip
\noindent\textbf{Region \RNum{2}:} We let $R \in (K/2,2K)$ for some $K < \kappa \rho$ and $K/2 > 2\rho^\alpha$.
This region is similar to Region \RNum{1} in that the appropriate model space is still $X_0$ as $R$ is sufficiently greater than $|z|$.
This region still has $\gamma_1 = 1$ and therefore the same metric $\omega = \iddbar(|z|^2 + r^2)$.
We focus the analysis around some basepoint $z_0$ such that $|z-z_0| < K$.
We rescale the coordinates in a similar manner, but centered at $z_0$:
\[
\tilde{x} = K^{-1}\cdot x,\quad \tilde{z} = K^{-1}(z-z_0),\quad \tilde{r} = K^{-1}r.
\]
In these new coordinates, the rescaled metric is expressed as $K^{-2}\omega = \iddbar(|\tilde{z}|^2 + \tilde{r}^2)$, and the equations defining $\cX$ in these coordinates are 
\[
\{f_1(\tilde{x}) -K^{-d_1}(K\tilde{z} + z_0)p_1 = \cdots = f_k(\tilde{x}) -K^{-d_k}(K\tilde{z} + z_0)p_k= 0\}.
\]
Since $|\tilde{z}|,\ p_i < 1$, the procedure is as above, but with error term $K^{-d_1}D$, giving the decay bound 
\[
|\nabla^i h|_\omega \leq C_i DK^{-d_1-i}.
\]
Because $d_1 > 2$ and $K > 4\rho^\alpha$, we compute 
\begin{equation}\label{eqn:K-dDbound}
DK^{2-d_1}K^{-2-i} < CD^{1 + \alpha(2-d_1)}K^{-2-i} < C\rho^{1 + \alpha(2-d_1)}R^{2-i},
\end{equation}
so in order to have $\delta > 1 + \alpha(2-d_1)$, we let $\alpha$ be sufficiently close to 1, so that $\delta > 3-d_1$.
This gives the same constraint as in the prior region.
We must have $\alpha > \frac{1}{d_1}$, so this is satisfied.

\smallskip
\noindent\textbf{Region \RNum{3}:} This is the gluing region where $R \in (K/2,2K)$ and $K \in (\rho^\alpha, 2\rho^\alpha)$ and $\rho \in (D/2,2D)$.
In this region, $|z| \sim D$ and both $\gamma_1$ and $\gamma_2$ are non-zero, so the metric will be the most complicated, having both terms.
The model space will still be $X_0$ as the Conlon-Hein bound from equation \ref{CHbound} will allow us to model the region with sufficient decay as the terms coming from $\phi_t$ approximate the geometry of the cone $X_0$.
We use the same rescaling as in the previous region
\[
\tilde{x} = K^{-1}\cdot x,\quad \tilde{z} = K^{-1}(z-z_0),\quad \tilde{r} = K^{-1}r.
\]
Under this change of coordinates, the rescaled metric is expressed as
\[
K^{-2}\omega = \iddbar(|\tilde{z}|^2 + \gamma_1\tilde{r}^2 + \gamma_2K^{-2}|K\tilde{z} + z_0|^{-2}\Phi((K\tilde{z}+z_0)^{-\frac{1}{d_1}}K\cdot \tilde{x})).
\]
In these rescaled coordinates, the derivatives of $\gamma_1$ and $\gamma_2$ are bounded and by applying the Conlon-Hein bound from equation \ref{CHbound}, we will demonstrate the desired decay rate.
In these coordinates, the manifold is expressed as
\[
\cX = \{f_1(\tilde{x}) - K^{-d_1}(K\tilde{z} + z_0)p_1 = \cdots = f_k(\tilde{x}) - K^{-d_k}(K\tilde{z} + z_0)p_k=0\}
\]
as in the prior region.
We apply the Conlon-Hein estimate \ref{CHbound} to compute the decay as 
\[
\nabla^i\left[ K^{-2}|K\tilde{z} + z_0|^{\frac{2}{d_1}}\Phi((K\tilde{z} + z_0)K\cdot \tilde{x})-\tilde{r}^2\right] = O((K^{-1}D^{\frac{1}{d_1}})^{2+c})
\]
since $K\tilde{z} + z_0 = z$ is of order $D$ as specified in the region.
There are two error terms of $K^{-d_1}D$ as in Region \RNum{2}, and the new error term $(K^{-1}D^{\frac{1}{d_1}})^{2+c}$.
The first term was already shown to satisfy the desired decay, so we must show it for the new error term given by the Conlon-Hein estimate \ref{CHbound}.
In this region, $K \sim D^\alpha$, so this term can be bounded
\[
(K^{-1}D^{\frac{1}{d_1}})^{2+c} < CD^{\frac{2+c}{d_1}-c\alpha}K^{-2}
\]
by replacing $K^{-c}$ with $D^\alpha$.
This expresses the new error term in the desired form, and we must demonstrate that $\delta$ can be chosen such that $\delta  > \frac{2+c}{d_1}-c\alpha$.
Let $\alpha$ be $1 - \varepsilon$, and then choose $\delta$ such that 
\[
\delta > \frac{2+c}{d_1} - c\alpha = \frac{2}{d_1} + \frac{c\alpha - d_1c}{d_1}.
\]
Since $d_1 > 2$, this is less than $\frac{2}{d_1}$.
Therefore, $\delta$ can be chosen to be less than $\frac{2}{d_1}$.
Furthermore, if $d_1 > 3$, $c$ can be given as $c > 1$, allowing a choice of $\delta$ to be negative.

\smallskip
\noindent\textbf{Region \RNum{4}:} In this region, the model space is $X_{p'} = \bC \times V_{p'}$ with $p' = (p_1,\ldots,p_\ell,0,\ldots,0)$.
The natural rescaling will produce a model space of $X_{t}$ with $t\to p'$.
Sufficiently far away, the model spaces will be smooth and of the form $V_{p' + \ve}$ and can therefore be approximated by $V_{p'}$ with sufficiently small error.

We use the range $R \in (K/2,2K)$, for $K \in (\kappa^{-1}\rho^\frac{1}{d_1},\rho^\alpha/2)$ and $\rho \in (D/2,2D)$.
In this region, $|z| \sim D$ as before.
However, $|z|$ is growing sufficiently faster than $r$, so the model space can no longer be $X_0$.
We use the same change of coordinates as above with a similarly defined $z_0$:
\[
\tilde{x} = K^{-1}\cdot x,\quad \tilde{z} = K^{-1}(z-z_0),\quad \tilde{r} = K^{-1}r.
\]
Since $\gamma_2 = 1$, the metric looks like a product of the flat metric on $\bC$ and $\iddbar\phi_t$.
The equations for $\cX$ are
\[
\cX = \{f_1(\tilde{x}) - K^{-d_1}(K\tilde{z} + z_0)p_1 = \cdots =f_k(\tilde{x}) - K^{-d_k}(K\tilde{z} + z_0)p_k =0\}.
\]
We compare this to the model space of $\bC \times V_{\hat{p}}$, for $\hat{p} = K^{-1}\cdot z_0p = (K^{-d_1}z_0p_1,\ldots,K^{-d_k}z_0p_k)$ which is sufficiently close to $p'$ so that $V_{\hat{p}}$ is smooth by the assumption that $V_{p'}$ is smooth.
The error term induced by this comparison is $K^{1-d_i}\tilde{z}$ in each component, with the maximal error where $i = 1$ of $K^{1-d_1}$.
The model metric is
\[
\omega_{X_{\hat{p}}} = \iddbar(|\tilde{z}|^2 + K^{-2}|z_0|^{\frac{2}{d_1}}\Phi(K|z_0|^{-\frac{1}{d_1}}\cdot \tilde{x})),
\]
and the metric on $\cX$ on the region is
\[
\omega = \iddbar(|\tilde{z}|^2 + K^{-2}|K\tilde{z} + z_0|^{\frac{2}{d_1}}\Phi((K\tilde{z} + z_0)^{-\frac{1}{d_1}}K\cdot\tilde{x})).
\]
We can use the homogeneity of $r$ to examine the difference
\[
E = \iddbar(K^{-2}|K\tilde{z} + z_0|^{\frac{2}{d_1}}\Phi((K\tilde{z} + z_0)^{-\frac{1}{d_1}}K\cdot\tilde{x}) - K^{-2}|z_0|^{\frac{2}{d_1}}\Phi(K|z_0|^{-\frac{1}{d_1}}\cdot \tilde{x})),
\]
which is the same as replacing $\Phi$ with $\Psi = \Phi - r^2$, which satisfies the decay criteria of being in the weighted space $C^\infty_{-c}(V_t)$, which is the existence of constants given by equation \ref{CHbound}.
The homogeneity of $r$ means all the $r^2$ terms cancel.

To finish, we need to bound this difference of potentials.
The function $\Psi$ is a function of both $z$ and $x = (x_1,\ldots, x_{n+k})$ and we denote $\Psi(z,x) = \Psi_{z}(x)$.
Since the points are very close, an estimate given by the gradient is
\[
\Psi_{q_1}(x_1) - \Psi_{q_2}(x_2) \leq \bigg\vert\frac{\p \Psi}{\p q}\bigg\vert|q_1-q_2| + \bigg\vert\frac{\p \Psi}{\p x}\bigg\vert|x_1-x_2| .
\]
Therefore, $\big\vert\frac{\p \Psi}{\p q}\big\vert < C$ since $\Phi(q,x)$ is smooth in both variables on a small disk around $p'$ with bounded gradient.

Since $q_1-q_2 = z^{-\frac{1}{d_1}}\cdot p - K^{-1}\cdot z_0p$, this is of order $\max(D^{-1/d_1}, K^{-d_1}D)$, and the first term fits the decay criterion.
The term $K^{-d_1}D$ can be bounded as
\begin{equation}\label{eqn:K-dDBound2}
   K^{-d_1}D = K^{-2}K^{2-d_1}D \leq CK^{-2}D^{\alpha(2-d_1) + 1}, 
\end{equation}
so we need $\alpha(2-d_1) + 1 < \delta$.
We consider $\alpha(2-d_1) + 1 < \frac{2}{d_1}$ and choose $\alpha$ sufficiently close to $1$, $\alpha > \frac{1}{d_1}$ to get the bound.
Therefore, $\delta$ can be chosen such that $\alpha(2-d_1) + 1 < \delta < \frac{2}{d_1}$ as necessary, as shown above in equation \ref{eqn:K-dDbound}.
In this region, $K < CD^{\alpha}$ so the first term is bounded by $D^{-1/d_1} < CK^{-2}K^2 D^{-1/d_1} < CK^{-2}D^{2\alpha - 1/d_1}$.
Since $\alpha > 1/d_1$, $\delta$ can be chosen to be less than $2/d_1$ as desired.
Therefore, we only need to examine the term $E$ as above.

We can Taylor expand these terms, and given that $|\tilde{z}| < 1$, $|z_0| \sim D$ and $K \ll D$, the growth is given by
\[
K(K\tilde{z} + z_0)^{-\frac{1}{d_1}} = z_0^{-\frac{1}{d_1}}K(1 + O(KD^{-1}))
\]
and we apply this with the estimate from equation \ref{CHbound} to get
\[
|\nabla^i E| < C_i (|z_0|^{-\frac{1}{d_1}}K)^{-2-c}KD^{-1} = O(K^{-1-c}D^{\frac{2+c}{d_1}-1}),
\]
and we combine this with the first error term of $K^{1-d_1}$, which will force the choice of $\delta$ to satisfy
\[
K^{1-d_1} + K^{-1-c}D^{\frac{2+c}{d_1}-1} < CD^\delta K^{-2}.
\]
Suppose that $d_1 > 3$ and thus $c > 1$, so this can be bounded as
\[
K^{1-d_1} = K^{3-d_1}K^{-2} < C D^{\frac{3}{d_1}-1}K^{-2}
\]
for the first term, and likewise
\[
K^{-1-c}D^{\frac{2+c}{d_1}-1} = (KD^{-\frac{1}{d_1}})^{1-c}D^{\frac{3}{d_1}-1}K^{-2}
\]
and since $\frac{3}{d_1} - 1 < 0$, $\delta$ can be chosen to be negative for these assumptions.

Now we suppose that $d_1 > 2$ only and thus we only know that $c > 0$, giving the estimate
\[
K^{-1-c}D^{\frac{2+d_1}{d_1}-1} = \underbrace{(KD^{-\frac{1}{d_1}})^{-c}}_{< C}\underbrace{(KD^{-1})}_{<D^{\varepsilon}}D^{\frac{2}{d_1}}K^{-2},
\]
which shows that $\delta$ can be chosen to satisfy $\delta < \frac{2}{d_1}$ as desired.

We want our model space to be consistently $X_{p'}$, not  $X_{\hat{p}}$, which varies in $z_0$.
We can compare these two spaces and see that the error induced will be sufficiently small.
Their equations are expressed as 
\[
X_{\hat{p}} = \{f_i(x) = K^{-d_i}z_0p_i\}\text{ compared to } X_{p'} = \{f_i(x) = p_i , f_j(x) = 0: i \leq \ell, j > \ell\}.
\]
The error term is computed by comparing $K^{-d_i}z_0p_i$ to $p_i$ for $i \leq \ell$ or $K^{-d_j}z_0$ for $j > \ell$.
Since there are bounds $C' D^{\frac{1}{d_1}}< K < CD^\alpha$ for $\alpha \in (\frac{1}{d_1},1)$, $K^{-d_j}D$ has sufficient decay for the region approximating this going to 0, since $d_j > d_1$ and $K\sim D^{\alpha}$.
The maximal error term of $K^{-d_1}D$ fits the decay as shown above in equation \ref{eqn:K-dDBound2}.
Furthermore, as above, if $d_1 > 3$, then $\delta$ can be chosen to be negative.

\smallskip
\noindent\textbf{Region \RNum{5}:} In this final region, $R < 2\kappa^{-1}\rho^{\frac{1}{d_1}}$ and $\rho \in (D/2,2D)$.
We perform a similar rescaling, choosing $z_0$ close to $z$ and we rescale by $|z_0|^{-\frac{1}{d_1}}$:
\[
\tilde{z} = z_0^{-\frac{1}{d_1}}(z-z_0),\quad \tilde{x} = z_0^{-\frac{1}{d_1}}\cdot x,\quad \tilde{r} = |z_0|^{-\frac{1}{d_1}}r.
\]
Under this rescaling, both $|\tilde{z}|,\tilde{r} < C$ are bounded.
The metric in these coordinates is expressed as
\[
|z_0|^{-\frac{2}{d_1}}\omega = \iddbar(|\tilde{z}|^2 + |z_0|^{-\frac{2}{d_1}}|z_0^{\frac{1}{d_1}}\tilde{z} + z_0|^{\frac{2}{d_1}}\Phi(z_0^{\frac{1}{d_1}}(z_0^{\frac{1}{d_1}}\tilde{z} + z_0)^{-\frac{1}{d_1}}\cdot\tilde{x})),
\]
and the equations expressing $\cX$ are 
\[
\cX = \{f_1(\tilde{x}) = z_0^{-\frac{d_1}{d_1}}(z_0^{\frac{1}{d_1}}\tilde{z} + z_0)p_1 = \cdots = f_k(\tilde{x}) = z_0^{-\frac{d_k}{d_1}}(z_0^{\frac{1}{d_1}}\tilde{z} + z_0)p_k= 0\}.
\]
Separating out the terms with variable $\tilde{z}$, we compare to the \edit{model variety $X_{\hat{p}}$ holding $z_0$ constant} cut out by equations
\[
X_{\hat{p}} = \{f_1(\tilde{x}) = z_0^{1-\frac{d_1}{d_1}}p_1 = \cdots=f_k(\tilde{x}) = z_0^{1-\frac{d_k}{d_1}}p_k  = 0\},
\]
which is smooth because $\hat{p} = (p_1,\ldots,p_\ell, z_0^{1-d_{\ell+1}/d_1}p_{\ell +1},\ldots,z_0^{1-d_k/d_1}p_{k})$ is sufficiently close to $p'$ whose fiber $V_{p'}$ is smooth by assumption, similarly to above.
We endow $\bC \times V_{\hat{p}}$ with the metric $\omega = \iddbar(|\tilde{z}|^2 + \phi_{\hat{p}}(\tilde{x}))$, noting that for $D \gg 0$ sufficiently large, $|t| < 1$ since $|p| = 1$.
To compare these two metrics, we estimate
\[
E = |z_0|^{-\frac{2}{d_1}}|z_0^{\frac{1}{d_1}}\tilde{z} + z_0|^{\frac{2}{d_1}}\Phi(z_0^{\frac{1}{d_1}}(z_0^{\frac{1}{d_1}}\tilde{z} + z_0)^{-\frac{1}{d_1}}\cdot\tilde{x}) - \Phi(\tilde{x})
\]
and we Taylor expand $z_0^{\frac{1}{d_1}}(z_0^{\frac{1}{d_1}}\tilde{z} + z_0)^{-\frac{1}{d_1}} = 1 + O(D^{\frac{1}{d_1}-1})$.
Because $\tilde{x}$ and $z_0^{\frac{1}{d_1}}(z_0^{\frac{1}{d_1}}\tilde{z} + z_0)^{-\frac{1}{d_1}}\cdot x)$ lie on different fibers, namely $p'$ and $\hat{p}$, the error term induced by the difference of their rescaled potentials from $\Phi$ can be compared as $\phi_{p'} - \phi_{\hat{p}}$, which is sufficiently small to fit the desired decay because $|p' - \hat{p}|$ can be made arbitrarily small.
As above, the derivative is bounded, so we need for $z_0^{1-\frac{d_i}{d_1}} - z^{1-\frac{d_i}{d_1}}$ to decay as $D^{-\varepsilon}$ for some $\varepsilon > 0$.
Indeed, this decays like $D^{-1-\varepsilon'}$ for $\varepsilon = 1 - \frac{d_{\ell+1}}{d_1}$ where $d_{\ell+1}$ is minimal with $d_1 = d_\ell < d_{\ell +1}$.
Here, $\delta$ can even be negative and this will not be the limiting term in the estimates.

This leaves only the error term of $E$ as before.
$E$ can be bounded by the same techniques as before, giving $|\nabla^i E| < CD^{\frac{1}{d_1}-1}$, so $\delta$ must be chosen to satisfy $D^{\frac{1}{d_1} - 1} < CD^{\delta - \frac{2}{d_1}}$.
If $d_1 \geq 2$, then $\delta$ can be chosen such that $\delta < \frac{2}{d_1}$, and if $d_1 > 3$, then $\delta$ can further be negative, concluding the proof.

Similarly to the previous region, instead of working on the varying space of $X_{\hat{p}}$ which depends on $z_0$, it will help to work on the fixed space $X_{p'}$. 
As $|z_0| \to \infty$, $\hat{p} = z_0^{-1/{d_1}} \cdot z_0p \to p'$, demonstrating this as the limiting fiber.
The error term in moving from $V_{\hat{p}}$ to $V_{p'}$ is of order $D^{1-\frac{d_{\ell+1}}{d_1}}$, which is of better decay than necessary.
This error can therefore be absorbed into the bound, as seen by comparing the equations
\[
X_{\hat{p}} = \{f_1(\tilde{x}) -p_1 = \cdots = f_\ell(\tilde{x}) - p_\ell = f_{\ell+1}(\tilde{x}) - z_0^{1 - \frac{d_{\ell+1}}{d_1}}p_{\ell+1} = \cdots f_k(\tilde{x})-z_0^{1 - \frac{d_k}{d_1}}p_k = 0\} 
\]
to the variety to which we want to compare,
\[
X_{p'}=\{f_1(x) - p_1 = \cdots = f_\ell(x)-p_\ell = f_{\ell+1}(x) = \cdots = f_k(x) = 0\}.
\]
Since this value has negative decay of $\rho^{-a}$ for $a = \frac{d_{\ell+1}}{d_1}$, $\delta$ can be chosen such that $-a <\delta - \frac{2}{d_1}$, or $\frac{2-d_{\ell+1}}{d_1} < \delta$.
Since $d_{\ell + 1} > 2$, $\delta$ can even be chosen to be negative.
\end{proof}

\section{Weighted H\"older Spaces}
\label{3Holder}
We construct weighted H\"older spaces so that we are able to localize the geometry to model manifolds of the form $X_t$ where we can invert the Laplacian.
Following Degeratu-Mazzeo \cite{Degeratu_2017}, we define a function $w$ that measures how close we are to the \textit{cone point}:
\[
w = \begin{cases}
1, & R > 2\kappa\rho\\
R/(\kappa \rho), & R \in (\kappa^{-1}\rho^{\frac{1}{d_1}},\kappa\rho)\\
\kappa^{-2}\rho^{\frac{1}{d_1}-1}, & R< \frac{1}{2}\kappa^{-1}\rho^{\frac{1}{d_1}}
\end{cases}
\]
for the same $\kappa$ as defined in Proposition \ref{acybound}.
We define the H\"older seminorm as
\[
[T]_{0,\gamma} = \sup_{\rho(z) > K}\rho(z)^\gamma w(z)^\gamma \sup_{z'\neq z, z \in B(z,c)}\frac{|T(z)-T(z')|}{|z-z'|^\gamma},
\]
where we compare using parallel transport when required.
We use this to define the H\"older weighted norm
\[
\|f\|_{C^{k,\alpha}_{\delta,\tau}} = \|f\|_{C^{k,\alpha}(\rho<2P))}  + [\rho^{k-\delta}\tau^{k-\tau}\nabla^kf]_{0,\alpha}+ \sum_{j=0}^k \sup_{\rho > P}\rho^{j-\delta}w^{j-\tau} |\nabla^j f|.
\]
This is re-expressible (up to a modification on a compact set making some regularized maximum of $1$ and $\rho$) as a normal H\"older norm with a conformal scaling of the metric by $\rho^{-2}w^{-2}$: $\|f\|_{C^{k,\alpha}_{\delta,\tau}} = \|\rho^{-\delta}w^{-\tau}\|_{C^{k,\alpha}_{ \rho^{-2}w^{-2}\omega}}$.
In this language, the decay of the Ricci potential in Proposition \ref{acybound} is $h \in C^{k,\alpha}_{\delta-2,-2}(V_t)$ for the value of $\delta$ chosen in the proposition.

We will invert the Laplacian only on a neighborhood of infinity, so we will have to extend functions outside a large disk to the interior.
The analysis of Proposition \ref{acybound} will give control over the metric in each of the regions such that a suitably scaled and smoothed reflection over the boundary of the disk will extend the function to the interior and only affect the norm by a bounded amount.
The proof follows the same techniques as in \Sz \cite[Proposition 8]{GaborPaper}.
\begin{proposition}[Extension of functions]\label{prop:extension}
Given $u \in C^{0,\alpha}_{\delta,\tau}(\rho^{-1}[A,\infty),\omega)$, there exists a bounded linear extension operator $E : C^{0,\alpha}_{\delta,\tau}(\rho^{-1}[A,\infty),\omega) \to C^{0,\alpha}_{\delta,\tau}(\cX)$ such that $Eu\vert_{\rho^{-1}[A,\infty)} = u$, and the norm of $E$ is independent of $A$.
\end{proposition}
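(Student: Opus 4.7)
The plan is to define $E$ by multiplication with a cutoff function localized near $\rho = A$. Choose a smooth monotone $\chi \colon \bR \to [0,1]$ with $\chi \equiv 0$ on $(-\infty,1]$ and $\chi \equiv 1$ on $[2,\infty)$, and set $\chi_A(z) = \chi(\rho(z)/A)$. Define
\[
Eu(z) = \begin{cases} \chi_A(z)\, u(z), & \rho(z) \geq A,\\ 0, & \rho(z) < A. \end{cases}
\]
This is well-defined and continuous since $\chi_A$ vanishes in a neighborhood of $\{\rho = A\}$, and $Eu \equiv u$ on $\rho^{-1}[2A,\infty)$, so the conclusion would hold with $A' = 2A$. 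Linearity of $u \mapsto Eu$ is immediate.

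For the norm bound I would pass to the equivalent conformally-rescaled formulation recorded right after the definition of the weighted H\"older norms: $\|f\|_{C^{0,\alpha}_{\delta,\tau}}$ is comparable (up to a correction on a fixed compact set) to the ordinary H\"older norm $\|\rho^{-\delta} w^{-\tau} f\|_{C^{0,\alpha}(\tilde\omega)}$ with respect to the rescaled metric $\tilde\omega = \rho^{-2} w^{-2} \omega$. The cutoff $\chi_A$ satisfies $|d\chi_A|_\omega \lesssim A^{-1}$ supported on $\{A \leq \rho \leq 2A\}$, so
\[
|d\chi_A|_{\tilde\omega} = \rho\, w\, |d\chi_A|_\omega \lesssim w \leq 1
\]
on the transition annulus, which itself has $\tilde\omega$-diameter bounded uniformly in $A$. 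Hence $\|\chi_A\|_{C^{0,\alpha}(\tilde\omega)} \leq C$ independent of $A$, and the H\"older product rule gives
\[
\|\rho^{-\delta} w^{-\tau} Eu\|_{C^{0,\alpha}(\tilde\omega)} \leq \|\chi_A\|_{C^{0,\alpha}(\tilde\omega)} \cdot \|\rho^{-\delta} w^{-\tau} u\|_{C^{0,\alpha}(\tilde\omega)} \leq C\, \|u\|_{C^{0,\alpha}_{\delta,\tau}(\rho^{-1}[A,\infty))}.
\]
The sup-norm portion of the target norm is immediate from $|Eu| \leq |u|$, so translating back yields $\|Eu\|_{C^{0,\alpha}_{\delta,\tau}(\cX)} \leq C\, \|u\|$ with $C$ independent of $A$.

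The key subtlety, and precisely the reason $A$-independence is stated explicitly, is the transition annulus $\{A \leq \rho \leq 2A\}$: in the unscaled metric the derivative of the cutoff carries a factor $1/A$, which, if not balanced by the weights $\rho^{\alpha-\delta}w^{\alpha-\tau}$, would in principle produce a constant that grows in $A$. The scale invariance of the weighted H\"older norms under the conformal change $\omega \mapsto \rho^{-2}w^{-2}\omega$ is exactly what kills this factor; verifying this cleanly is the only nontrivial step. The fixed-compact summand $\|\cdot\|_{C^{0,\alpha}(\rho<2P)}$ in the target norm is handled trivially: for $A > 2P$ it vanishes on $Eu \equiv 0$, and for the bounded range $A \leq 2P$ the constant can be enlarged once and for all to absorb the compact contribution.
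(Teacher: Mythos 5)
Your construction is correct for the statement as written, but it takes a genuinely different route from the paper. You truncate: multiply by the cutoff $\chi(\rho/A)$ and extend by zero, so $Eu=u$ only on $\rho^{-1}[2A,\infty)$, which the statement permits (take $A'=2A$). The paper instead extends $u$ \emph{inward} across the hypersurface $\{\rho=A\}$ by reflection in local charts of size comparable to $\rho w$, glued by a partition of unity, after checking region by region (the regions of Proposition \ref{acybound}) that the suitably rescaled geometry of these charts is uniformly bounded, so that reflection costs a constant independent of $A$. What the paper's route buys is the stronger conclusion $Eu\vert_{\rho^{-1}[A,\infty)}=u$ on the whole original domain, which is what one usually means by an extension operator and what makes the application in Section \ref{6CY@inf} (a right inverse for $\Delta$ on exactly $\rho^{-1}[A,\infty)$) immediate; with your $E$ the later argument still goes through, but it only yields $\Delta(Pu)=u$ on $\rho^{-1}[2A,\infty)$, so $A$ must be relabeled there (harmless for the final theorem, but worth stating). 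What your route buys is economy: a single Lipschitz estimate for $\chi_A$ in the conformally rescaled metric $\tilde\omega=\rho^{-2}w^{-2}\omega$ replaces the chart-by-chart analysis. Three small points to tighten: the claim that the annulus $\{A\le\rho\le 2A\}$ has $\tilde\omega$-diameter bounded independently of $A$ is false near the cone-point region (there $w\sim\rho^{\frac{1}{d_1}-1}$, so after rescaling the annulus is long in the $z$-direction), but you do not need it, since $\|\chi_A\|_\infty\le 1$ together with the uniform Lipschitz bound already controls the H\"older seminorm; the bound $|d\chi_A|_\omega\lesssim A^{-1}$ silently uses $|d\rho|_\omega\le C$, which holds but should be checked against the form of $\omega$ in each region (clear where $\omega$ is comparable to the cone metric, and in Region \RNum{5} because $\rho\sim|z|$ and the $\iddbar|z|^2$ part of $\omega$ controls $d|z|$); and for pairs straddling $\{\rho=A\}$ you should say explicitly that $\chi_A$ vanishes at the inner point, so the straddling H\"older quotient is bounded by the Lipschitz constant of $\chi_A$ times the weighted sup norm of $u$.
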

Given this result, it is natural to define the same $C^{k,\alpha}_{\delta,\tau}(\rho^{-1}[A,\infty),\omega)$ norm over this subset of $\cX$ as the infimum of the $C^{k,\alpha}_{\delta,\tau}(\cX,\omega)$ norm over all extensions.

\section{Comparison to the Model Spaces}
\label{4modelcomp}
In this section, we show that $X_0$ and $X_{p'}$ model $\cX$ outside a sufficiently large compact set where $\rho > A$ for large enough $A \gg 0$.
The error of approximating the metric $\omega$ on $\cX$ by the Calabi-Yau metrics on $X_0$ and $X_{p'}$ is arbitrarily small in the above weighted spaces, with sufficient overlap to allow for gluing.
\subsection{Comparison to \texorpdfstring{$X_0$}{X0}}
We define a projection map from a region on $\cX$ that has geometry modeled by $X_0$.
We define the region $\cU = \left\{\rho > A,\, R > \Lambda \rho^{\frac{1}{d_1} }\right\}\cap \cX$ for large constants $A,\,\Lambda > 0$, and a map $G : \cU \to X_0$ where $\cU$ and $X_0$ are considered subsets of $\bC^{n+k+1}$ and $G$ is the nearest point projection in the cone metric $\iddbar(|x|^2 + |z|^2)$ for variables $x_1,\ldots,x_{n+k}$ and $z$.
The projection is such that $G(x,z) = (x',z)$ where $x'$ is the nearest point in the cone $V_0$ under the cone metric $\iddbar(R^2)$.
Since every fiber is asymptotic to $V_0$ and we are sufficiently far away with $\rho > A$ for $A \gg 0$, this is well-defined.

\begin{proposition}\label{prop:x0comp}
Given $\varepsilon > 0$, there exist constants $\Lambda,A \gg 0$ sufficiently large such that 
\[
|\nabla^i (G^*g_{X_0}-g)|_{g} < \varepsilon w^{-i}\rho^{-i},\quad i \leq k+1
\]
for $k$ in the $C^{k,\alpha}_{\delta,\tau}$ space.
Succinctly stated, $\|\nabla^i (G^*g_{X_0}-g)\|_{C^{k,\alpha}_{0,0}} < \varepsilon$ for any $\varepsilon > 0$.
\end{proposition}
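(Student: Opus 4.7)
The plan is to prove the estimate pointwise in scale-invariant coordinates, where both $\cX$ and the metric $g$ become small perturbations of $V_0 \times \bC$ with its standard cone metric, and then convert the rescaled estimate into the weighted norm on the nose. Fix a point $q = (x_0, z_0) \in \cU$ with $R(q) \sim K$ and $\rho(q) \sim D$. The defining condition $R > \Lambda \rho^{1/d_1}$ yields $K \geq \Lambda D^{1/d_1}$, so $K^{-d_1} D \leq \Lambda^{-d_1}$ is arbitrarily small once $\Lambda$ is large. Rescale by $K^{-1}$ via $\tilde{x} = K^{-1}\cdot x$, $\tilde{z} = K^{-1}(z - z_0)$ and work on a ball of uniform size in the rescaled metric $K^{-2}\omega$. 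In these rescaled coordinates $\cX$ is cut out by $f_i(\tilde{x}) = K^{-d_i}(K\tilde{z} + z_0)p_i$, and the maximal error is $K^{-d_1}D \leq \Lambda^{-d_1}$ (all other components have smaller error since $d_i \geq d_1$).

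First I would verify that $G$ is well-defined, smooth, and $C^\infty$-close to the identity on $\cU$. The implicit function theorem, applied uniformly across a finite cover of the unit sphere in $V_0$ exactly as in Proposition \ref{prop:CYFiberMetric}, produces holomorphic maps from $\cX$ to $V_0$ differing from the identity by $O(\Lambda^{-d_1})$ in $C^\infty$ on each rescaled ball. Nearest-point projection in the cone metric inherits the same approximation, so $G$ exists, is smooth, and satisfies $G = \mr{id} + O(\Lambda^{-d_1})$ in $C^{k+1}$ on every rescaled unit ball.

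Next I would compare the metrics. The region $\cU$ lies inside Regions \RNum{1}--\RNum{4} of Proposition \ref{acybound} and misses Region \RNum{5} as soon as $\Lambda > \kappa^{-1}$. In Regions \RNum{1} and \RNum{2} the cutoff $\gamma_1 \equiv 1$, so $\omega|_{\cU} = \iddbar(|z|^2 + r^2)$, and the difference $G^* g_{X_0} - g$ is driven purely by the deviation of $\cX$ from $V_0 \times \bC$, bounded by $\varepsilon$ in the rescaled metric by the previous step. In Regions \RNum{3} and \RNum{4} the $\phi_t$-term enters, and the Conlon--Hein estimate \ref{CHbound} controls the difference between $K^{-2}|K\tilde{z}+z_0|^{2/d_1}\phi_{\hat p_{z^{-1/d_1},z}}((K\tilde{z}+z_0)^{-1/d_1}K\cdot \tilde{x})$ and $\tilde{r}^2$ by $O((K^{-1}D^{1/d_1})^{2+c}) = O(\Lambda^{-(2+c)})$, which is again arbitrarily small. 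Summing the two contributions gives $\|G^*g_{X_0} - g\|_{C^{k+1}(K^{-2}\omega)} < \varepsilon$ uniformly on rescaled unit balls.

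Finally, the translation from rescaled to weighted norms is automatic: each covariant derivative in $g$ picks up a factor of $K^{-1}$ relative to the rescaled metric $K^{-2}\omega$, and throughout $\cU$ one has $\rho w \sim K$ (since $\rho w \sim \rho \sim R$ in Region \RNum{1} and $\rho w = R/\kappa \sim K$ in Regions \RNum{2}--\RNum{4}). Hence $|\nabla^i(G^*g_{X_0} - g)|_g \lesssim \varepsilon K^{-i} \sim \varepsilon \rho^{-i} w^{-i}$ for $i \leq k+1$, which is exactly the claimed $C^{k,\alpha}_{0,0}$ estimate (the H\"older seminorm is treated identically by applying the $C^{0,\alpha}$ bound on the rescaled ball). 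The main obstacle is the bound inside Region \RNum{4}, where the model comparison is not direct: one is comparing an $r^2$-based cone metric on $X_0$ with a metric built out of the unknown Calabi-Yau potentials $\phi_t$ on the fibers, and the smallness must come from combining the Conlon--Hein decay rate $\lambda^{-2-c}$ with the precise constraint $R > \Lambda \rho^{1/d_1}$ built into $\cU$. This is essentially a repackaging of the Region \RNum{3}/\RNum{4} computation in the proof of Proposition \ref{acybound}, but phrased as a pullback comparison rather than a Ricci-potential decay.
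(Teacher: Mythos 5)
Your proposal is correct and follows essentially the same route as the paper: reuse the rescaled-coordinate comparisons of Proposition \ref{acybound} in Regions \RNum{1}--\RNum{3} (noting $G$ is only smooth, so the comparison is of Riemannian metrics), and in Region \RNum{4} compare directly to $X_0$ via the Conlon--Hein estimate \ref{CHbound}, where the constraint $R > \Lambda\rho^{1/d_1}$ turns the error terms $K^{-d_1}D$ and $(KD^{-1/d_1})^{-2-c}$ into quantities of size $O(\Lambda^{-d_1})$ and $O(\Lambda^{-2-c})$. Your extra step justifying $G$ via the implicit function theorem and the explicit conversion $\rho w \sim K$ to the weighted norm are consistent with, and slightly more detailed than, the paper's argument.
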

\begin{proof}
This represents Regions \RNum{1}-\RNum{4}, and for the first three regions, the same computations from Proposition \ref{acybound} prove this as well, with the exception that this computation must be done using the Riemannian structure instead of the holomorphic structure since $G$ is smooth, but not holomorphic.
We need to show a bound $|D^{-2}\nabla^i (G^*g_{X_0}-g)|_{D^{-2}g} < \varepsilon$, and the same rescaling map gives us that the error induced is of size $D^{1-d_1}$.

For Region \RNum{4}, the computation is different than in Section \ref{2ACYmetric}, since before it was modeled on $X_{p'}$, but here it must be compared to $X_0$ instead.
For the estimate in this region, we will use the Conlon-Hein estimate \ref{CHbound} with an appropriate rescaling.
In this region, let $\rho \in (D/2,2D)$, $R \in (K/2,2K)$ and $\Lambda \rho^{\frac{1}{d_1}} < K < \frac{1}{2}\rho^\alpha$.
Pick some $z_0$ such that $|z_0| \in (D/4,4D)$ sufficiently close to $z$.
We introduce the same change of coordinates to $\tilde{x},\tilde{z},$ and $\tilde{r}$.
The equations of $\cX$ are given in these coordinates by
\[
\cX = \left\{f_i(\tilde{x}) = K^{-d_i}(K\tilde{z} + z_0)p_i = 0\right\} \text{ compared to }
X_0 = \left\{f_i(\tilde{x}) = 0\right\}.
\]
Since $|\tilde{z}| < 1$, the error introduced by ignoring the non-constant term with $\tilde{z}$ is $K^{1-d_1}$.
For $D \gg 0$ sufficiently large, this can be chosen to be within any $\varepsilon/2$, and therefore can be ignored.
Here $E$ is bounded by 
\[
E = \iddbar(K^{-2}|K\tilde{z} + z_0|^{\frac{2}{d_1}}\Phi((K\tilde{z} + z_0)^{-\frac{1}{d_1}}K \cdot \tilde{x}) - \tilde{r}^2),
\]
and as before, the estimate states that $(KD^{-\frac{1}{d_1}})^{-2-c} < C \Lambda^{-2-c}$.
The constant $\Lambda \gg 0$ can be chosen such that this is less than $\varepsilon/2$, finishing the proof.
\end{proof}
\subsection{Comparison to \texorpdfstring{$X_{p'}$}{Xp'}}
In this section, we study the region where the model variety is $X_{p'}$.
The region is specified by $R < \Lambda \rho^{\frac{1}{d_1}}$ and $\rho > A$.
Fix a $z_0$ with $|z_0| > A$, and define the region $\cV \subset \cX$ such that the point $(x,z) \in \cV$ if $|z-z_0| < B|z_0|^{\frac{1}{d_1}}$ for a large fixed constant $B$ and $R < \Lambda \rho^{\frac{1}{d_1}}$.
Define a new coordinate system as  
\[
\hat{x} = z_0^{-\frac{1}{d_1}}\cdot x,\quad \hat{x} = z_0^{-\frac{1}{d_1}}(z-z_0),\quad \hat{R} = |z_0|^{-\frac{1}{d_1}}R,
\]
and since $R$ can be small in this region, we need to define an auxiliary variable $\hat{\zeta} = \max (1,\hat{R})$.
There are bounds $|\hat{z}| < B, |\hat{R}| < C\Lambda$ for some fixed $C$ since $\rho \sim |z_0|$.
In these coordinates, the expression of $\cX$ is given by 
\[
\cX = \left\{f_i(\hat{x}) = z_0^{\frac{1-d_i}{d_1}}\hat{z}p_i + z_0^{1-\frac{d_i}{d_1}}p_i\right\}.
\]
We define a map $H :\cV \to X_{p'}$ by letting $H(\hat{x},\hat{z}) = (\hat{x}',\hat{z})$ where $\hat{x}'$ is simply the nearest point projection to 
\[
X_{p'} = \left\{f_1(\hat{x}) - p_1 = \cdots =f_{\ell}(\hat{x}) - p_{\ell} = f_{\ell+1}(\hat{x}) = \cdots f_k(\hat{x}) = 0\right\} = \bC \times V_{p'},
\]
with $\ell$ as before the maximal index such that $d_1 = d_\ell$.

\begin{proposition}
\label{prop:Xp'comp}
For any $\varepsilon > 0$, there exist constants $\Lambda, A > 0$ as functions of $B,\varepsilon, \Lambda$ such that $\| |z_0|^{\frac{2}{d_1}}H^*g_{X_{p'}} - g\|_{C^{k,\alpha}_{0,0}} < \varepsilon$.
Specifically, $|\nabla^i (H^*g_{X_{p'}} - |z_0|^{-\frac{2}{d_1}}g)|_{|z_0|^{-\frac{2}{d_1}}g} < \varepsilon \hat{\zeta}^{-i}$ for $i \leq k + 1$.
\end{proposition}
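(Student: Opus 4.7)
The plan is to work entirely in the rescaled coordinates $(\hat x, \hat z)$ introduced above, in which the chart $\cV$ has bounded size and both metrics $|z_0|^{-2/d_1}g$ and $H^*g_{X_{p'}}$ are given by explicit $\iddbar$-potentials that can be compared directly. I would take the explicit rescaled potential for $\omega$ from Region \RNum{5} of the proof of Proposition \ref{acybound}, subtract off the model potential $|\hat z|^2 + \phi_{p'}(\hat x)$ for $X_{p'}$, and then show that the resulting difference together with the $H$-projection error is controlled, in the weighted norm, by a negative power of $|z_0|$.

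There are three discrepancies to track. First, the defining equations of $\cX$ in rescaled coordinates differ from those of $X_{p'}$: the term $z_0^{(1-d_i)/d_1}\hat z p_i$ with $i\le \ell$ is $O(|z_0|^{(1-d_1)/d_1})$ since $d_1>2$ and $|\hat z|<B$, while for $j>\ell$ both $z_0^{(1-d_j)/d_1}\hat z p_j$ and $z_0^{1-d_j/d_1}p_j$ are $O(|z_0|^{1-d_{\ell+1}/d_1})$. Via the nearest-point projection $H$, this translates into a shift in $\hat x$ of the same order. Second, the factors $|z_0^{1/d_1}\hat z + z_0|^{2/d_1}$ and $z_0^{1/d_1}(z_0^{1/d_1}\hat z + z_0)^{-1/d_1}$ differ from $|z_0|^{2/d_1}$ and $1$ respectively by Taylor remainders of order $|z_0|^{1/d_1 - 1}$ on $|\hat z|<B$. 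Third, and most importantly, the fiber parameter $q = \hat p_{z_0^{-1/d_1},z}$ of $\phi_q$ satisfies $|q-p'|=O(|z_0|^{1/d_1 - 1}) + O(|z_0|^{1 - d_{\ell+1}/d_1})$: its first $\ell$ coordinates equal $p_i(1+z_0^{1/d_1-1}\hat z)$ while the remaining ones are pure negative powers of $|z_0|$.

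To upgrade these pointwise estimates to a $C^{k,\alpha}$ bound, I would invoke the smoothness of the family $s\mapsto \phi_s$ in the parameter $s$ on a small neighborhood of $p'$, which was established at the end of Section \ref{1Setup} via the implicit function theorem in the weighted space $C^\infty_\beta$. On the bounded range of $\hat x$ with $\hat R < C\Lambda$, both $\phi_s$ and all its $s$- and $\hat x$-derivatives are uniformly bounded, so the difference $\phi_q(\cdot) - \phi_{p'}(\cdot)$ is controlled in $C^{k,\alpha}$ by $|q-p'|$. For the outer annular portion where $\hat R$ grows and $\hat \zeta = \hat R$, I would combine this with the Conlon-Hein estimate \ref{CHbound} applied to $\phi_{p'}$ at the rescaled scale $\lambda = |z_0|^{1/d_1}$; the extra factor $\hat\zeta^i$ on the right-hand side arises precisely from switching between the unscaled and conformally scaled metrics in this conical region, as in the analysis of Region \RNum{5} of Proposition \ref{acybound}. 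Taking $A$ large enough (as a function of $B,\Lambda,\varepsilon$) makes every error factor smaller than $\varepsilon$, yielding the claim.

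The main obstacle will be handling the potential $\phi_q$ uniformly from the cone point $\hat R \approx 0$ out to $\hat R \sim \Lambda$: near $\hat R = 0$ the Conlon-Hein estimate gives no information and we must rely entirely on the smoothness of the family $\phi_s$ and its finite-order derivative bounds in the interior of the fiber, whereas in the conical region we must patch this with the Conlon-Hein approximation by $r^2$. The definition $\hat\zeta=\max(1,\hat R)$ is engineered precisely to interpolate between these two regimes, and verifying that the H\"older seminorms transform consistently under the conformal rescaling will require care analogous to the regional analysis performed in Proposition \ref{prop:extension}.
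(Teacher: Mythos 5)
Your proposal is correct and follows essentially the same route as the paper: rescale by $|z_0|^{1/d_1}$, compare the defining equations of $\cX$ with those of $X_{p'}$ (tracking the $O(|z_0|^{1/d_1-1})$ and $O(|z_0|^{1-d_{\ell+1}/d_1})$ discrepancies), Taylor-expand the scaling prefactors, control the parameter shift $|q-p'|$ via the smooth dependence of $\phi_s$ established in Section \ref{1Setup}, and use the Conlon--Hein estimate \ref{CHbound} in the annular portion where $\hat\zeta\sim\hat R$, exactly as in Regions \RNum{4}--\RNum{5} of Proposition \ref{acybound}. The only cosmetic differences are that you compare directly with $X_{p'}$ rather than passing through the intermediate varying model $X_{\hat p_{K^{-1},z_0}}$, and your identification of the Conlon--Hein scale should be the effective radius $\hat\zeta\sim|z_0|^{-1/d_1}K$ rather than $|z_0|^{1/d_1}$, which does not affect the argument.
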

\begin{proof}
This follows almost directly according to the analysis in Section \ref{2ACYmetric}.
In Region \RNum{4} where $\rho \in (D/2,2D)$, $R \in (K/2,2K)$ and $\kappa^{-1}D^{\frac{1}{d_1}} < K < \Lambda D^{\frac{1}{d_1}}$, $\hat{\zeta} \sim \hat{R}$ and $|z_0| \sim D$.
We introduce new coordinates
\[
\tilde{x} = K^{-1}z_0^{\frac{1}{d_1}}\cdot \hat{z},\quad \tilde{z} = K^{-1}z_0^{\frac{1}{d_1}}\hat{z}.
\]
Let $\tilde{p} = (K^{-d_1}z_0p_1,\ldots, K^{-d_k}z_0p_k)$ which will be the comparison fiber. 
In the above coordinates, the expression of $\cX$ is given as
\[
\cX = \left\{f_i(\tilde{x}) = K^{-d_i}(K\tilde{z} + z_0)p_i = 0\right\} \text{ compared to }
X_{\tilde{p}} = \left\{f_i(\tilde{x}) = K^{-d_i} z_0p_i = 0\right\}
\]
under the projection operator map.
Again, we need to do the same computations, but with respect to the Riemannian metric, as the map $H$ is not holomorphic.
We use estimate \ref{CHbound} as in the proof of Region \RNum{4} in Theorem \ref{acybound}.
Using the same notation, there is the bound
\begin{align*}
    |\nabla^i (H^*g_{X_{p'}} - |z_0|^{-\frac{2}{d_1}}g)|_{|z_0|^{-{2}/{d_1}}g} &< K^{1-d_1-i}\tilde{z} + (|z_0|^{-\frac{1}{d_1}}K)^{-2-c}KD^{-1} \\
    &<K^{1-d_1-i}BK^{-1}z_0^{\frac{1}{d_1}} + (|z_0|^{-\frac{1}{d_1}}K)^{-2-c}KD^{-1} \\
    &< CBD^{\frac{1}{d_1}-1}
\end{align*}
where the first term in the second line is $CK^{-d_1}$, which is bounded by a constant times $D^{-1}$ and the second term is a decaying term containing $B$ times $KD^{-1} \sim D^{\frac{1}{d_1}-1}$ as desired.

Region \RNum{5} is modeled by $X_{p'}$.
This computation is identical to the above in Theorem \ref{acybound}.
We use the same change of variables applied to the $\hat{x},\hat{z}$ variables. 
The difference between $\hat{\zeta}$ and $\hat{R}$ can be ignored since it is a bounded prefactor to the decaying term.
This will reduce to examining 
\[
E = |z_0|^{-\frac{2}{d_1}}|z_0^{\frac{1}{d_1}}\tilde{z} + z_0|^{\frac{2}{d_1}}\Phi(z_0^{\frac{1}{d_1}}(z_0^{\frac{1}{d_1}}\tilde{z} + z_0)^{-\frac{1}{d_1}}\cdot\tilde{x}) - \Phi(\tilde{x}),
\]
and as before, the Taylor expansion gives the estimate $z_0^{\frac{1}{d_1}}(z_0^{\frac{1}{d_1}}\tilde{z} + z_0)^{-\frac{1}{d_1}} = 1 + O(D^{\frac{1}{d_1}-1})$, giving decay at the rate of $BD^{\frac{1}{d_1} - 1}$.
The other parts are identical as well, since $z_0^{-1/{d_1}}\cdot z_0 p\to p'$ as $|z_0| \to \infty$ by construction of $p'$, completing the proof.
\end{proof}
The results above can be used to show that the tangent cone at infinity of $\cX$ is $X_0$.
The proof of this follows the same structure as in \Sz \cite[Propositions 9, 10]{GaborPaper} as summarized in the following.
\begin{proposition}\label{prop:GHDist}
For any $\varepsilon > 0$, there is some $D \gg 0$ sufficiently large such that the Gromov-Hausdorff distance between the annular regions $\rho \in (D/2,2D)$ in $\cX$ with $\omega$ and $X_0$ with $\omega_{V_0} + \iddbar |z|^2$ is less than $D\varepsilon$.

Furthermore, for $D\gg 0$ sufficiently large, two points $x,x' \in \cX$ such that $d(o,x) = d(o,x') = D$ can be constructed as the endpoints of a curve $\gamma$ whose image is contained in the annulus $B(o,CD) \setminus B(o,C^{-1}D)$, and $\mr{length}_\omega(\gamma) < CD$ for some uniform constant $C$.
\end{proposition}
\section{Inverting the Laplacian and perturbing to a Calabi-Yau metric}\label{sec:5}
To complete the proof of Theorem \ref{thm:mainThm}, we use linear theory, together with the machinery of Tian-Yau \cite{YauTianII} and Hein \cite{HeinPhd}, to perturb the asymptotically Calabi-Yau metric to a globally Ricci-flat metric using the Calabi-Yau structure on the model spaces and their approximations of $\cX$.  
As these techniques are well-documented, we only outline the argument and refer the reader to \cite{HeinPhd, GaborPaper, YauTianII} for full details.

In the proper weighted spaces on $X_0$ and $X_{p'}$, the Laplacian is invertible.
The singularities are tractable using results on edge metrics from \cite{Degeratu_2017} and \cite{Rafe1991EllipticTO}.
The two main results on the invertibility of the Laplacian follow similarly to \Sz \cite[Propositions 23, 24]{GaborPaper} are as follows.
\begin{proposition}
\label{cor:lapX0}
On $X_0$, the operator $\Delta_{X_0}$ is invertible for the proper weights
\[
\Delta_{X_0} : C^{k,\alpha}_{\delta,\tau}(X_0) \to C^{k-2,\alpha}_{\delta-2,\tau-2}(X_0)
\]
if $\tau \in (2-m,0)$ and $\delta$ is generic.
On $X_{p'}$, the Laplacian operator $\Delta_{X_{p'}} $ is invertible for the proper weights 
\[
\Delta_{X_{p'}} :C^{k,\alpha}_\tau(X_{p'}) \to C^{k-2,\alpha}_{\tau-2}(X_{p'})
\]
if $\tau \in (2-m,0)$.
\end{proposition}
We now can perturb the asymptotically Calabi-Yau metric $\omega$ to a metric that is Ricci-flat outside of some compact set.
With the results of Section \ref{4modelcomp} demonstrating that the model spaces of $X_0$ and $X_{p'}$ are sufficient approximations in the various regions, and the above propositions inverting the Laplacians on the model space, we can now localize the manifold to the model spaces and invert the Laplacian on $\cX$ outside a sufficiently large compact set.
The proofs follow using the techniques laid out in \Sz \cite[Proposition 22]{GaborPaper}.
\begin{proposition}\label{prop:DelInvertNearInf}
Let $\tau \in (2-2n,0)$ and $\delta$ generic, for $A > 0$ sufficiently large.
Then the Laplacian 
\[
\Delta : C^{2,\alpha}_{\delta,\tau}(\rho^{-1}[A,\infty),\omega) \to C^{0,\alpha}_{\delta - 2,
\tau - 2}(\rho^{-1}[A,\infty),\omega)
\]
is surjective with inverse bounded independently of $A$.
\end{proposition}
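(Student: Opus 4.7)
The plan is to construct an approximate right inverse for $\Delta$ using the model-space solution operators from Corollary \ref{cor:lapX0} and Proposition \ref{prop:lapinvXp'}, and then correct it via a Neumann series. I would proceed as follows.

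First, I would fix a two-piece partition of unity $\{\eta_0,\eta_{p'}\}$ on $\rho^{-1}[A,\infty)\cap\cX$ subordinate to the cover by the regions $\cU=\{R>\Lambda\rho^{1/d_1}\}$ (modeled on $X_0$ via the projection $G$ of Proposition \ref{prop:x0comp}) and $\cV=\{R<2\Lambda\rho^{1/d_1}\}$ (modeled locally on $X_{p'}$ via the projection $H$ of Proposition \ref{prop:Xp'comp}). The cut-off functions can be arranged so that $|\nabla^j\eta_\bullet|\le C\rho^{-j}w^{-j}$, ensuring that differentiating them costs exactly what the weights allow. Given $u\in C^{0,\alpha}_{\delta-2,\tau-2}(\rho^{-1}[A,\infty),\omega)$, I would first use the extension operator of Proposition \ref{prop:extension} to view $u$ as defined on all of $\cX$ with comparable norm, and then split $u=\eta_0 u+\eta_{p'} u$.

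Next I would solve the model problems. For $\eta_0 u$, I push forward by $G$ to a function on $X_0$ (using that the weights on $\cX$ match those defining $C^{k,\alpha}_{\delta,\tau}(X_0)$ in this region) and invert via Corollary \ref{cor:lapX0} to obtain $\tilde f_0$; pulling back and multiplying by a slightly larger cut-off gives $f_0$ on $\cX$. For $\eta_{p'} u$, I work locally in charts $\cV_{z_0}$: in the rescaled coordinates of Proposition \ref{prop:Xp'comp} the weight $|z_0|^{-2/d_1}$ matches the natural scaling, so I can transfer to $X_{p'}$, invert via Proposition \ref{prop:lapinvXp'}, transfer back, and then sum the local contributions against a locally finite partition in the $z$-direction. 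Setting $Pu=f_0+f_{p'}$, I compute $\Delta(Pu)=u+Eu$ where $E$ collects three error types: (i) the difference $(\Delta_\cX-G^*\Delta_{X_0})$ and $(\Delta_\cX-H^*\Delta_{X_{p'}})$ acting on the model solutions, controlled by Propositions \ref{prop:x0comp} and \ref{prop:Xp'comp}; (ii) commutator terms $[\Delta,\eta_\bullet]\tilde f_\bullet$ in which the weight structure of the partition and the interior Schauder bounds on $\tilde f_\bullet$ precisely cancel; and (iii) boundary terms near $\rho=A$ handled by the uniformity of the extension operator.

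Each of these error contributions carries a small prefactor that tends to $0$ as $A,\Lambda\to\infty$ (for (i) this is the $\varepsilon$ of the comparison propositions; for (ii) it comes from the $\rho^{-1}w^{-1}$ decay of $\nabla\eta_\bullet$ paired with the weighted estimate on $\nabla\tilde f_\bullet$; for (iii) the norm of $E$ is bounded by the $A$-independent extension norm). Choosing $A$ large enough that $\|E\|_{C^{0,\alpha}_{\delta-2,\tau-2}\to C^{0,\alpha}_{\delta-2,\tau-2}}\le 1/2$, the operator $I+E$ is invertible by Neumann series, and $f:=P(I+E)^{-1}u$ solves $\Delta f=u$ with $\|f\|_{C^{2,\alpha}_{\delta,\tau}}\le 2\|P\|\cdot\|u\|_{C^{0,\alpha}_{\delta-2,\tau-2}}$, which is the desired $A$-independent bound.

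The main obstacle will be step (ii)-style bookkeeping: verifying that the model-space operator norms on $X_0$ and $X_{p'}$, which are stated in the \emph{global} weighted norms of Section \ref{sec:5}, translate into bounds compatible with the two-parameter weights $C^{k,\alpha}_{\delta,\tau}$ used on $\cX$. This requires matching the $\rho$-weight $\delta$ to the global scaling parameter on $X_0$ via Proposition \ref{prop:GHDist} and Corollary \ref{cor:GHConv}, and matching the $w$-weight $\tau$ to the $\zeta$-weight on $X_{p'}$ uniformly in $z_0$. The compatibility of these weightings with the rescalings used in Propositions \ref{prop:x0comp} and \ref{prop:Xp'comp} is exactly what the conformal reformulation of $C^{k,\alpha}_{\delta,\tau}$ after Proposition \ref{acybound} was designed to encode, so once the bookkeeping is set up carefully the estimates should fall out, and the genericity of $\delta$ and the choice $\tau\in(2-2n,0)$ enter only through invoking the model-space invertibility statements.
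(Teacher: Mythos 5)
Your overall architecture — extend $u$ via Proposition \ref{prop:extension}, split it between the region $\cU$ modeled on $X_0$ and the region $\cV$ modeled on $X_{p'}$ (the latter cut further into $z$-charts of radius $\sim B|z_i|^{1/d_1}$), invert on the models via Corollary \ref{cor:lapX0} and Proposition \ref{prop:lapinvXp'}, glue with cutoffs, and absorb the error by a Neumann-series/perturbation argument — is the same as the paper's. But there is a genuine gap in your error type (ii), and it is exactly the point where the paper has to work.

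You choose a partition of unity with $|\nabla^j\eta_\bullet|\le C\rho^{-j}w^{-j}$ and claim the commutator terms $2\nabla\eta_\bullet\cdot\nabla\tilde f_\bullet+\tilde f_\bullet\,\Delta\eta_\bullet$ carry a prefactor tending to $0$ as $A,\Lambda\to\infty$ because "the weight structure of the partition and the interior Schauder bounds precisely cancel." That cancellation only shows these terms are \emph{bounded} in $C^{0,\alpha}_{\delta-2,\tau-2}$ by $C\|\tilde f_\bullet\|_{C^{2,\alpha}_{\delta,\tau}}\le C\|u\|$; it gives no smallness at all, and the bound $C\rho^{-1}w^{-1}$ on $\nabla\eta_\bullet$ does not improve as $A$ or $\Lambda$ grows. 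With $\|E\|$ only bounded by a fixed constant, you cannot arrange $\|E\|\le 1/2$, and the Neumann series does not close. The paper's fix is to take the transition of the cutoffs on a \emph{logarithmic} scale in $R/\rho^{1/d_1}$ (respectively in $\hat{\zeta}$): the functions $\beta_1$ in equation \ref{bound:beta1} and $\beta_2$ spread the transition over a band of width $\log\Lambda$, so that $\|\nabla\beta_1\|_{C^{k,\alpha}_{-1,-1}}\le C/\log\Lambda$ and similarly for $\beta_2$. It is this extra factor $1/\log\Lambda$ — not the mere compatibility of the weights — that makes the gluing error arbitrarily small (together with the $\varepsilon$ from Propositions \ref{prop:x0comp} and \ref{prop:Xp'comp} for your error type (i), which is handled correctly). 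Note also that the cutoff must equal $1$ on the support of the corresponding piece of $u$ and transition only well inside the region where the model comparison still holds, which is what the specific exponents $\Lambda^{1/2},\Lambda^{3/4}$ in $\beta_1$ arrange; a generic two-piece partition subordinate to $\cU,\cV$ does not automatically have this property. With the logarithmic cutoffs in place, the rest of your argument (Neumann series, $A$-independence of the bound via the extension operator, and the bookkeeping matching $\delta,\tau$ to the model weights through the conformal description of $C^{k,\alpha}_{\delta,\tau}$ and the factors $|z_i|^{2/d_1}$) proceeds as you outline and matches the paper.
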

From this, we can now perturb the asymptotically Calabi-Yau metric $\omega$ to be Ricci-flat outside this compact set, as in \Sz \cite[Proposition 25]{GaborPaper}.
\begin{proposition}
Let $A \gg 0$ be chosen sufficiently large, $\tau < 0$ sufficiently close to $0$, and $\delta < \frac{2}{d_1}$, (the conditions stated in Proposition \ref{acybound}).
There exists a function $u \in C^{2,\alpha}_{\delta,\tau}(\cX)$ with sufficiently small norm such that 
\[
(\omega + \iddbar u)^{n+1} = (\sqrt{-1})^{(n+1)^2}\Omega \wedge \ol{\Omega}
\]
on the set $\rho^{-1}[A,\infty)$.
\end{proposition}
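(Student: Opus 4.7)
The plan is to recast the complex Monge-Amp\`ere equation as a perturbation of the Laplace equation and solve it by Banach contraction on a small ball of $C^{2,\alpha}_{\delta,\tau}(\rho^{-1}[A,\infty),\omega)$. Writing the equation as $(\omega+i\partial\bar\partial u)^{n+1} = e^h \omega^{n+1}$ with $h$ the Ricci potential from Proposition \ref{acybound}, and expanding
\[
\log\frac{(\omega+i\partial\bar\partial u)^{n+1}}{\omega^{n+1}} = \Delta u + Q(u),
\]
where $Q(u)$ vanishes to second order at $u = 0$, the equation becomes $\Delta u = h - Q(u)$. I therefore look for a fixed point of the map $T(u) = P(h - Q(u))$, where $P$ is the bounded right inverse to $\Delta$ from the preceding proposition, whose operator norm is independent of $A$.

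Three ingredients drive the contraction on a ball $B_\epsilon \subset C^{2,\alpha}_{\delta,\tau}$. First, $\|h\|_{C^{0,\alpha}_{\delta-2,\tau-2}(\rho^{-1}[A,\infty))}$ must be made arbitrarily small by taking $A \gg 0$. This follows from Proposition \ref{acybound} upon choosing $\delta$ strictly smaller than the sharp upper bound given there (permitted since the proposition produces an open range of admissible $\delta$), so that the actual decay of $h$ beats the target weight by a factor $\rho^{-\eta}$ for some $\eta > 0$, leaving a residual $A^{-\eta}$ upon truncation to $\rho > A$. Second, I need the quadratic estimate
\[
\|Q(u)\|_{C^{0,\alpha}_{\delta-2,\tau-2}} \leq C\|u\|_{C^{2,\alpha}_{\delta,\tau}}^2
\]
and the corresponding Lipschitz bound on $Q(u_1) - Q(u_2)$. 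Pointwise $|Q(u)| \lesssim |\nabla^2 u|^2$ at leading order, so the relevant weight is $\rho^{2\delta-4}w^{2\tau-4}$, which is controlled by $\rho^{\delta-2}w^{\tau-2}$ provided $\rho^{\delta-2}w^{\tau-2}$ stays bounded on $\rho \geq A$. Using $w \gtrsim \rho^{1/d_1-1}$ in the most singular region, this reduces to $\delta - 2 + (1/d_1-1)(\tau-2) \leq 0$, which is exactly the condition that $\delta < 2/d_1$ and $\tau$ is sufficiently close to $0$ from below.

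Granted these estimates, $T$ sends $B_\epsilon$ into itself and is a contraction for $A$ large enough and $\epsilon$ small enough, so Banach's fixed point theorem produces the desired $u$ solving the equation on $\rho^{-1}[A,\infty)$. The main obstacle is the quadratic estimate in the weighted H\"older norm: $Q(u)$ is a nonlinear function of the eigenvalues of $\omega^{-1}i\partial\bar\partial u$, and controlling its $C^{0,\alpha}$ seminorm requires uniform estimates across the five regions of $\cX$ with genuinely different rescaling behavior. This is handled by passing to the conformally rescaled metric $\rho^{-2}w^{-2}\omega$, in which each local ball carries bounded geometry independent of its location (inherited from the model-space comparisons in Propositions \ref{prop:x0comp} and \ref{prop:Xp'comp}), so that the standard Monge-Amp\`ere quadratic estimate on a unit ball of controlled geometry applies with a uniform constant. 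The Lipschitz bound follows from the same argument applied to the integral representation $Q(u_1) - Q(u_2) = \int_0^1 \frac{d}{ds}Q(su_1 + (1-s)u_2)\,ds$.
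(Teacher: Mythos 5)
Your proposal is correct and follows essentially the same route as the paper: expand the Monge--Amp\`ere operator as $F(u)=F(0)+\Delta_\omega u+Q(u)$, use the $A$-independent right inverse $P$ of the Laplacian to set up a contraction $u\mapsto P(\pm F(0)-Q(u))$ on a small ball in $C^{2,\alpha}_{\delta,\tau}$, make $F(0)$ small on $\rho^{-1}[A,\infty)$ by exploiting the slack $\delta'<\delta$ from Proposition \ref{acybound}, and control $Q$ via the same weight comparison $\rho^{\delta-2}w^{\tau-2}\lesssim 1$, i.e.\ $\delta-2+(\tfrac{1}{d_1}-1)(\tau-2)\le 0$. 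The only differences are cosmetic (your sign convention for the Ricci potential and bounding $Q$ directly by a pointwise weighted estimate rather than through the intermediate $C^{2,\alpha}_{2,2}$ norm as in the paper), and they do not affect the argument.
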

\begin{proof}
The Ricci potential $\log [(\omega + \iddbar u)^{n+1}/[(\sqrt{-1})^{(n+1)^2}\Omega \wedge \ol{\Omega})]$ quantifies the failure of the guess $u$ to give rise to a Ricci-flat metric; it is 0 exactly for a Ricci-flat metric.
To converge at a Ricci-flat metric, the initial metric can be iteratively improved until it reaches the desired solution.
The conditions on $\tau$ and $\delta$ are such that $\omega$ and $\omega + \iddbar u$ are uniformly equivalent metrics for $u$ sufficiently small.
We define the set of viable $u$ as follows:
\begin{equation}\label{eqn:viableU}
\cB = \{u \in C^{2,\alpha}_{\delta,\tau} : \|u\|_{C^{2,\alpha}_{\delta,\tau} }\leq \epsilon_0\},
\end{equation}
where we chose $\epsilon_0$ such that every $u \in \cB$ is such that $\omega + \iddbar u$ is uniformly equivalent to $\omega$.
We define an operator $F$ \edit{which quantifies the failure of Ricci-flatness and vanishes at the desired Calabi-Yau metric.
This operator $F$ is defined as}
\[
F : \cB \to C^{0,\alpha}_{\delta-2,\tau-2}(\rho^{-1}[A,\infty)),\qquad F : u \mapsto \log \frac{(\omega + \iddbar u)^{n+1}}{(\sqrt{-1})^{(n+1)^2}\Omega \wedge \ol{\Omega}},
\]
where we restrict to the Ricci potential on $\rho^{-1}[A,\infty)$.
If we find $u$ such that $F(u) = 0$, we are done.

We can expand $F(u) = F(0) + \Delta_\omega u + Q(u)$ for some non-linear operator $Q$.
Using $P$ as the right inverse for $\Delta$ as described in the previous section, sufficiently far away from the origin, we must produce some function $u$ solving the PDE $u = P(-F(0) - Q(u))$.
For notation, we name this operator $N(u) = P(-F(0) - Q(u))$, and $u$ must be a fixed point of $N$.
We compute $Q(u) = F(u) - F(0) - \Delta_\omega u$, so we can take the difference
\[
Q(u) - Q(v) = F(u) - F(v) + \Delta_\omega v - \Delta_\omega u = F(u) - F(v) + \Delta_\omega (v-u),
\]
and by the chosen region, $u$ and $v$ are small \edit{in the $C^{2,\alpha}_{\delta,\tau}$ norm, so $\iddbar u$ is correspondingly small in $C^{0,\alpha}_{\delta-2,\tau - 2}$.
Therefore, the magnitude of $F(u) - F(v)$ is small measured in $C^{0,\alpha}_{\delta-2,\tau-2}$ for $\|u-v\|_{C^{2,\alpha}_{\delta,\tau}}$ small.}
There is an estimate on the difference: 
\[
\|Q(u) - Q(v)\|_{C^{0,\alpha}_{\delta-2,\tau-2}} \leq C(\|u\|_{C^{2,\alpha}_{2,2}} + \|v\|_{C^{2,\alpha}_{2,2}})\|u - v\|_{C^{2,\alpha}_{\delta,\tau}}
\]
as seen by expanding this operator as
\[
(\omega + \iddbar u)^{n+1} = \omega^{n+1} + (n+1) \iddbar u \wedge \omega^{n} + \binom{n+1}{2}\iddbar u \wedge \iddbar u \wedge \omega^{n-1} + \cdots.
\]
This can be computed as
\begin{align*}
    Q(u)-Q(v)&= F(u) - F(v) - \Delta_\omega\edit{(u-v)} \\
    &= \|\iddbar (u-v) \wedge \iddbar (u +v)\|_{C^{2,\alpha}_{2,2}}\\
    &\leq C(\|u\|_{C^{2,\alpha}_{2,2}} + \|v\|_{C^{2,\alpha}_{2,2}})\|u - v\|_{C^{2,\alpha}_{\delta,\tau}}.
\end{align*}

Therefore, there exists some constant $\epsilon_1$ such that the map $N(u) = P(-F(0) - Q(u))$ is a contraction on $\cB$ for $\|u\|_{C^{2,\alpha}_{2,2}} < \epsilon_1$.
To compare the norms $C^{2,\alpha}_{\delta,\tau}$ to $C^{2,\alpha}_{2,2}$, we use the relationship of the weight function
\[
\rho^\delta w^\tau \leq C\rho^{\delta - 2 + (\tau - 2)(\frac{1}{d_1}-1)}\rho^2w^2,
\]
which tells us that the $C^{2,\alpha}_{\delta,\tau}$ bound controls the $C^{2,\alpha}_{2,2}$ bound: 
\[
\|u\|_{C^{2,\alpha}_{2,2}} \leq C \|u\|_{C^{2,\alpha}_{\delta,\tau}}.
\]
Applying this bound on viable $\edit{u \in \cB}$ \edit{defined in equation \ref{eqn:viableU}}, the norm of $u-v$ controls the norm of $N(u) - N(v)$:
\[
\|N(u) - N(v)\| \leq \frac{1}{2}\|u-v\|,
\]
and this completes the proof, once verified that $N$ actually has its image contained in $\cB$.

By Proposition \ref{acybound}, $F(0) \in C^{0,\alpha}_{\delta' - 2,\tau-2}(\cX)$ for any $\delta' < \delta$ sufficiently close.
This gives the estimate
\[
\|F(0)\|_{C^{0,\alpha}_{\delta-2,\tau-2}(\rho^{-1}[A,\infty))} < CA^{\delta' - \delta},
\]
which is arbitrarily small for $A \gg 0$ large enough.
For $u \in \cB$, we can compute
\begin{align*}
    \|N(u)\|_{C^{2,\alpha}_{\delta,\tau}} &\leq \|N(0)\|_{C^{2,\alpha}_{\delta,\tau}} + \|N(u) - N(0)\|_{C^{2,\alpha}_{\delta,\tau}} \\
    &\leq C\|F(0\|_{C^{0,\alpha}_{\delta,\tau}(\rho^{-1}[A,\infty))} + \frac{1}{2}\|u\|_{C^{2,\alpha}_{\delta,\tau}}\\
    &= CA^{\delta'-\delta} + \frac{\epsilon_0}{2},
\end{align*}
so for $A$ large enough, this can be made to be less than $\epsilon_0$, and thus in $\cB$ as desired.
That $N$ is a contraction means it has a fixed point, which by definition has Ricci-potential 0. 
The fixed point metric is a Calabi-Yau metric on $\rho^{-1}[A,\infty)$.
\end{proof}

Applying Hein's PhD thesis \cite[Proposition 4.1]{HeinPhd} to the metric $\tilde{\omega} = \omega + \iddbar u$ for $u$ the solution of $F(u) = 0$ will perturb this metric to a global Ricci-flat metric, completing the proof of Theorem \ref{thm:mainThm}.
There are two conditions on $\cX$ necessary to apply this result: the $\mr{SOB}(2n+2)$ condition and the existence of a $C^{3,\alpha}$ quasi-atlas.

\edit{First, we verify that $\cX$ has the $\mr{SOB}(2n+2)$ property} from Hein \cite[Definition 3.1]{HeinPhd}, a condition on the \edit{(i) connectedness of annuli, (ii) a bound on the volume growth rate, and (iii) a lower bound on the Ricci curvature decaying quadratically.
The last condition (iii) on the Ricci curvature lower bound is immediately satisfied since the metric $\tilde{\omega}$ is Ricci-flat outside a large compact set.} 
\edit{For the connectedness condition (i), it is sufficient to verify the the condition of relatively connected annuli (RCA) \cite{Degeratu_2017}}, which can be shown using the Gromov-Hausdorff estimates to $X_0$. 
We utilize Proposition \ref{prop:GHDist} \edit{to verify the RCA condition}, which states that the tangent cone at infinity of $(\cX,\tilde{\omega})$ is $(X_0,\omega_{V_0} + \iddbar |z|^2)$, which is a metric cone.
Using the second half of Proposition \ref{prop:GHDist} and the fact that $\omega$ and $\tilde{\omega}$ are uniformly equivalent, this verifies the RCA property.
\edit{The above steps, following Degeratu-Mazzeo \cite{Degeratu_2017}, reduce the $\mr{SOB}(2n+2)$ condition to} showing that the asymptotic growth rate of balls is $r^{2(n+1)}$, \edit{verifying condition (ii)}.
We must produce some $C > 0$ such that for $r > C$, the volume of $B(x,r)$ satisfies
\begin{equation}\label{eqn:SOB}
C^{-1}r^{2(n+1)} < \mr{Vol}(B(x,r)) < Cr^{2(n+1)}
\end{equation}
for all $x \in \cX$ measured with respect to $\tilde{\omega}$.
Colding's volume convergence theorem \cite{Colding} applied to Gromov-Hausdorff limits will verify this.
Since $\tilde{\omega}$ is Ricci-flat outside a compact set, and has tangent cone at infinity with Euclidean volume growth, applying the volume convergence theorem of \cite{Colding} proves inequality \ref{eqn:SOB}.

\edit{Second, we must construct a $C^{3,\alpha}$ quasi-atlas}: charts covering $\cX$ of uniform size controlled in the $C^{3,\alpha}$ norm.
This \edit{construction follows from Propositions \ref{prop:x0comp} and \ref{prop:Xp'comp}}, which state that we can create charts of radius bounded by $C\rho w$ around any point bounded in the $C^{k,\alpha}$ norm.
The quantity $\rho w$ is unbounded as $\rho \to \infty$, as it is estimated as $\rho w > \kappa^{-2}\rho^{\frac{1}{d_1}}$.
\edit{The charts are uniformly bounded in size with respect to $\omega$}.
When we perturb the solution to $\tilde{\omega}$, the previous propositions only give us control in $C^{2,\alpha}$ initially.
\edit{Since $\tilde{\omega}$ solves the complex Monge-Amp\`ere equation, given the estimates from Section \ref{2ACYmetric}, we can apply elliptic regularity to boost the control to $C^{k,\alpha}$ as required.}
Therefore, this construction meets the requirements of Hein to apply \cite[Proposition 4.1]{HeinPhd}, which \edit{perturbs $\tilde{\omega}$ to a Calabi-Yau metric} on all of $\cX$ with the same tangent cone at infinity of $X_0$, proving Theorem \ref{thm:mainThm}.
\bibliographystyle{amsplain} 
\bibliography{biblio}
\end{document}